\newtheorem{theorem}{Theorem}
\newtheorem{lemma}[theorem]{Lemma}
\def\A{\mathbb{A}}
\def\F{\mathbb{F}}
\def\Z{\mathbb{Z}}
\def\ord{\text{\rm ord}}
\def\sgn{\text{\rm sgn}}
\numberwithin{equation}{section}
\numberwithin{theorem}{section}
\begin{document}

\title{Gauss factorials of polynomials over finite fields}
\author{Xiumei Li}
\address{School of Mathematical Sciences, Qufu Normal University, Qufu Shandong, 273165, China}
\email{lxiumei2013@hotmail.com}

\author{Min Sha}
\address{School of Mathematics and Statistics, University of New South Wales,
 Sydney, NSW 2052, Australia}
\email{shamin2010@gmail.com}

\begin{abstract}
In this paper we initiate a study on Gauss factorials of polynomials over finite fields, which are the analogues of Gauss factorials of positive integers.
\end{abstract}

\maketitle

\section{Introduction}

A well-known result in number theory, called
Wilson's theorem, says that for any prime number $p$, we have
\begin{equation}
\label{eq:Wilson}
(p-1)! \equiv -1 \pmod p.
\end{equation}
If we replace $p$ by any composite integer in \eqref{eq:Wilson}, it is not correct any more.
Gauss proved a composite analogue of Wilson's theorem: for any integer $n>1$,
\begin{equation}
\label{eq:Gauss}
\begin{split}
\prod_{\substack{1\le j \le n-1 \\ \gcd(j,n)=1}} j
& \equiv \left\{\begin{array}{ll}
-1 \pmod n & \textrm{for $n=2,4,p^k,$ or $2p^k$,}\\
\\
1 \pmod n &\textrm{otherwise},
\end{array}
\right.
\end{split}
\end{equation}
where $p$ is an odd prime and $k$ is a positive integer.

In \cite{CD2008}, Cosgrave and Dilcher called the product
\begin{equation}
\label{eq:CD}
\prod_{\substack{1\le j \le N \\ \gcd(j,n)=1}} j
\end{equation}
a \emph{Gauss factorial}, where $N$ and $n$ are positive integers.
We refer to \cite{CD2011} for a very good survey on Gauss factorials of integers, and \cite{CD2010, CD2011a, CD2014, CD2015} for recent work. In particular, when $n\ge 3$ is odd and $N=(n-1)/2$, the multiplicative order of the Gauss factorial \eqref{eq:CD} modulo $n$ has been determined in \cite[Theorem 2]{CD2008}.

Throughout the paper, the letter $p$ always denotes a prime, and $q=p^s$ for some integer $s\ge 1$. Let $\F_q$ be the finite field of $q$ elements. We denote by $\F_q[X]$ the polynomial ring over $\F_q$.
In the sequel, for simplicity let $\A=\F_q[X]$.
It is known that $\A$ has many properties in common with the integers $\Z$, and thus many number theoretic problems about $\Z$ have their analogues for $\A$.

For any polynomial $f(X)\in \A$ of degree $\deg f \ge 1$, we define the \textit{Gauss factorial} (denoted by $G(f)$) of $f$ as follows:
$$
G(f) = \prod_{\substack{g\in \A \\ 0 \le \deg g < \deg f \\ \gcd(g,f)=1}} g.
$$
That is, $G(f)$ is the product of all \emph{non-zero} polynomials of degree less than $\deg f$ and coprime to $f$.
By convention, if $f\in \F_q$, we define $G(f)=1$.
Note that if $f$ is irreducible, the definition of $G(f)$ only depends on the degree of $f$.

Moreover, given integer $n\ge 1$ and polynomial $f\in \A$, we define
$$
G(n,f) = \prod_{\substack{g\in \A \\ 0 \le \deg g \le n \\ \gcd(g,f)=1}} g,
$$
which is also called a \textit{Gauss factorial}. In particular, $G(f)=G(\deg f - 1, f)$.

In this paper, we initiate the study of Gauss factorials of polynomials over finite fields by generalizing  \eqref{eq:Gauss} and the main result in \cite{CD2008}.
Certainly, there are many other things remaining to be explored.

We want to remark that one can similarly define \emph{factorials} of polynomials over finite fields, and consider
generalizing related classical results (some of them are listed in \cite{Bhargava}).

\section{Preliminaries}

In this section, for the convenience of the reader we recall some basic results about polynomials over finite fields,
which can be found in \cite[Chapter 1 and Chapter 3]{Rosen}.

 For $f\in \A$, set $|f|=q^{\deg f}$ if $f\ne 0$, and $|f|=0$ otherwise.
For a non-constant polynomial $f\in \A$, write its prime factorization as
\begin{equation}
\label{eq:factor}
f= a P_1^{e_1} \cdots P_t^{e_t},
\end{equation}
where $a \in \F_q^*$, integer $e_j \ge 1$ ($1\le j \le t$), and each $P_j$ ($1\le j \le t$) is a monic irreducible polynomial. Here,  a monic irreducible polynomial in $\A$ is said to be a \emph{prime polynomial}, and so in \eqref{eq:factor} each $P_j$
$(1\le j \le t)$ is a \emph{prime divisor} of $f$.

Given a non-zero polynomial $f\in \A$, denote by $\A/f\A$ the residue class ring of $\A$ modulo $f$ and by $(\A/f\A)^*$ its unit group.

\begin{lemma}
\label{lem:structure1}
Let $f\in \A$ be a non-constant polynomial with prime factorization as in \eqref{eq:factor}.
 Then, we have
$$
(\A/f\A)^* \cong (\A/P_1^{e_1}\A)^* \times \cdots \times (\A/P_t^{e_t}\A)^*.
$$
\end{lemma}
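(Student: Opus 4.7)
The plan is to reduce the statement to the Chinese Remainder Theorem applied to the Euclidean domain $\A=\F_q[X]$, and then pass to unit groups. The unit $a\in\F_q^*$ in the factorization \eqref{eq:factor} is harmless, since $f\A = (P_1^{e_1}\cdots P_t^{e_t})\A$ as ideals, so without loss of generality I may assume $f$ is monic.

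First, I would observe that the pairwise distinct prime polynomials $P_1,\dots,P_t$ generate pairwise comaximal ideals in $\A$: since $\A$ is a principal ideal domain, $\gcd(P_i,P_j)=1$ for $i\ne j$ implies $P_i\A + P_j\A = \A$, and this property lifts to the powers, i.e.\ $P_i^{e_i}\A + P_j^{e_j}\A = \A$ for $i\ne j$ (a standard consequence of $(1)=(P_i\A+P_j\A)^{e_i+e_j}$). Next, I would invoke the Chinese Remainder Theorem for commutative rings: for pairwise comaximal ideals $I_1,\dots,I_t$ of a commutative ring $R$, the natural map
$$
R/(I_1 \cap \cdots \cap I_t) \longrightarrow R/I_1 \times \cdots \times R/I_t
$$
is a ring isomorphism. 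In our setting $I_j = P_j^{e_j}\A$, and the intersection coincides with the product $P_1^{e_1}\cdots P_t^{e_t}\A = f\A$ because the ideals are comaximal (or directly, because $\A$ is a unique factorization domain and $\operatorname{lcm}(P_1^{e_1},\dots,P_t^{e_t}) = P_1^{e_1}\cdots P_t^{e_t}$). This yields the ring isomorphism
$$
\A/f\A \;\cong\; \A/P_1^{e_1}\A \times \cdots \times \A/P_t^{e_t}\A.
$$

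Finally, I would take unit groups on both sides. For any finite direct product of unital rings $R_1\times\cdots\times R_t$, an element is a unit iff each coordinate is a unit, so $(R_1\times\cdots\times R_t)^* = R_1^*\times\cdots\times R_t^*$. Applying this to the isomorphism above gives the desired group isomorphism
$$
(\A/f\A)^* \;\cong\; (\A/P_1^{e_1}\A)^* \times \cdots \times (\A/P_t^{e_t}\A)^*.
$$

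There is really no serious obstacle here; the lemma is a direct transcription of the classical Chinese Remainder Theorem from $\Z$ to $\A$, made legitimate by the fact that $\A$ is a Euclidean (hence principal ideal) domain whose prime elements are exactly the monic irreducible polynomials. The only mildly subtle point worth flagging in the write-up is the comaximality of the ideals $P_i^{e_i}\A$ and $P_j^{e_j}\A$, which must be verified and not just asserted from $\gcd(P_i,P_j)=1$.
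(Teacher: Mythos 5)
Your proof is correct and complete: the reduction to the ring-theoretic Chinese Remainder Theorem via pairwise comaximality of the ideals $P_i^{e_i}\A$, followed by taking unit groups of the product ring, is exactly the standard argument for this fact. The paper offers no proof of its own here --- Lemma~\ref{lem:structure1} is quoted as a known preliminary from \cite{Rosen} --- and your write-up supplies precisely the expected justification, including the one point genuinely worth checking (that comaximality of $P_i\A$ and $P_j\A$ lifts to their powers).
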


\begin{lemma}
\label{lem:structure2}
Let $P\in \A$ be an irreducible polynomial, and $e$ a positive integer. Then, we have
$$
(\A/P^e\A)^* \cong (\A/P\A)^* \times (\A/P^e\A)^{(1)},
$$
where $(\A/P\A)^*$ is a cyclic group of order $|P|-1$, and $(\A/P^e\A)^{(1)}$ is a $p$-group of order $|P|^{e-1}$.
\end{lemma}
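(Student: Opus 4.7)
The plan is to exhibit the decomposition by means of a canonical short exact sequence and then show it splits on orders-coprime grounds. Let $\pi \colon (\A/P^e\A)^* \to (\A/P\A)^*$ denote the map induced by reduction modulo $P$, and define $(\A/P^e\A)^{(1)}$ to be the kernel of $\pi$; concretely, a residue class in $\A/P^e\A$ lies in $(\A/P^e\A)^{(1)}$ precisely when it admits a representative of the form $1+Ph$ with $\deg h < (e-1)\deg P$. This produces the exact sequence
$$
1 \longrightarrow (\A/P^e\A)^{(1)} \longrightarrow (\A/P^e\A)^* \stackrel{\pi}{\longrightarrow} (\A/P\A)^* \longrightarrow 1.
$$

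Next I would nail down the relevant orders. Since $P$ is irreducible, $\A/P\A$ is the finite field $\F_{|P|}$, so $(\A/P\A)^*$ is cyclic of order $|P|-1$. The ring $\A/P^e\A$ is local with maximal ideal generated by $P$, so an element is a unit if and only if it is coprime to $P$; counting residues not divisible by $P$ gives $|(\A/P^e\A)^*|=|P|^e-|P|^{e-1}=(|P|-1)|P|^{e-1}$, which forces $|(\A/P^e\A)^{(1)}|=|P|^{e-1}$ and in particular surjectivity of $\pi$. Because $|P|=q^{\deg P}$ is a power of $p$, the kernel is a $p$-group, as claimed.

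It remains to split the sequence. Since $\gcd(|P|-1,|P|^{e-1})=1$ (the first is coprime to $p$, the second is a $p$-power), one option is to invoke the Schur--Zassenhaus theorem directly. A more constructive route is a Teichm\"{u}ller-style lift: fix a generator $\alpha$ of $(\A/P\A)^*$ and apply Hensel's lemma to $T^{|P|-1}-1$ at $\alpha$, where the derivative $(|P|-1)T^{|P|-2}$ is a unit modulo $P$; the resulting lift $\tilde\alpha\in (\A/P^e\A)^*$ has multiplicative order exactly $|P|-1$, the subgroup $\langle\tilde\alpha\rangle$ meets $(\A/P^e\A)^{(1)}$ trivially on order grounds, and $\pi$ restricts to an isomorphism $\langle\tilde\alpha\rangle \xrightarrow{\sim} (\A/P\A)^*$, completing the direct product decomposition.

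The hard part is the splitting step: the short exact sequence, the order computations, and the $p$-group conclusion are essentially bookkeeping, while either Schur--Zassenhaus or the Hensel lift requires one genuine input. In the polynomial setting the cleanest realisation of the latter is to work inside the $P$-adic completion of $\A$, where Hensel's lemma applies directly, and then reduce modulo $P^e$.
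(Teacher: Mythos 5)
Your proof is correct. The paper does not prove this lemma itself --- it is quoted as a standard fact with a citation to Rosen's book --- and your argument (reduction-mod-$P$ exact sequence, order count, coprime-order splitting via a Teichm\"uller-type lift) is essentially the standard proof found there. One small simplification worth noting: since $(\A/P^e\A)^*$ is abelian, you do not need Schur--Zassenhaus or Hensel at all; a finite abelian group whose order factors as a product of two coprime integers is automatically the direct product of the corresponding torsion subgroups (equivalently, take $\tilde\alpha=\beta^{|P|^{e-1}}$ for any lift $\beta$ of a generator $\alpha$, which already has order dividing $|P|-1$ and reduces to $\alpha$).
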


For any non-zero polynomial $f\in \A$, let $\Phi(f)= |(\A / f\A)^*|$, which is the so-called \emph{Euler totient function}
of $\A$.

\begin{lemma}
\label{lem:Euler}
For any non-zero polynomial $f\in \A$, we have
$$
\Phi(f) = |f| \prod_{P\mid f} \big( 1- 1/|P| \big),
$$
where the product runs through all the prime divisors of $f$.
\end{lemma}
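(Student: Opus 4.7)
The plan is to reduce the computation of $\Phi(f)$ to prime power moduli via the Chinese Remainder Theorem and then evaluate it explicitly for prime powers, using the two structural lemmas already stated.

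First, I would write the prime factorization $f = a P_1^{e_1}\cdots P_t^{e_t}$ as in \eqref{eq:factor}. Since $a \in \F_q^*$ is a unit, the ideals $(f)$ and $(P_1^{e_1}\cdots P_t^{e_t})$ coincide, so we may harmlessly assume $f$ is monic for this calculation. By Lemma \ref{lem:structure1},
$$
(\A/f\A)^* \;\cong\; (\A/P_1^{e_1}\A)^* \times \cdots \times (\A/P_t^{e_t}\A)^*,
$$
and therefore $\Phi(f) = \prod_{j=1}^{t} \Phi(P_j^{e_j})$. This reduces the problem to computing $\Phi(P^e)$ for a prime polynomial $P$ and an integer $e\geq 1$.

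Next, by Lemma \ref{lem:structure2}, for each prime power $P^e$ we have $|(\A/P^e\A)^*| = (|P|-1)\cdot |P|^{e-1}$, which I would rewrite as
$$
\Phi(P^e) = |P|^e - |P|^{e-1} = |P^e|\bigl(1 - 1/|P|\bigr),
$$
using $|P^e| = q^{e\deg P} = |P|^e$. Multiplying these expressions over $j=1,\ldots,t$ yields
$$
\Phi(f) = \prod_{j=1}^{t} |P_j^{e_j}|\bigl(1 - 1/|P_j|\bigr) = \Bigl(\prod_{j=1}^{t} |P_j|^{e_j}\Bigr) \cdot \prod_{P\mid f}\bigl(1 - 1/|P|\bigr).
$$

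Finally, I would observe that $|f| = q^{\deg f} = q^{\sum_j e_j \deg P_j} = \prod_j |P_j|^{e_j}$, which identifies the first factor above with $|f|$ and completes the proof. There is no real obstacle here: the argument is a direct transcription of the classical proof of the formula for Euler's totient function over $\Z$, with Lemmas \ref{lem:structure1} and \ref{lem:structure2} playing the roles of the integer CRT and the structure of $(\Z/p^e\Z)^*$ respectively; the only minor point to be careful about is the bookkeeping between $|P^e|$ and $|P|^e$, which is immediate from the definition $|g| = q^{\deg g}$.
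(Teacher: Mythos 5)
Your proof is correct and is the standard argument; the paper itself gives no proof of this lemma, merely recalling it from \cite{Rosen}, where essentially the same reduction via Lemma \ref{lem:structure1} to the prime-power count $\Phi(P^e)=(|P|-1)|P|^{e-1}$ from Lemma \ref{lem:structure2} is used. The only detail you skip is the case where $f$ is a non-zero constant, for which both sides are trivially equal to $1$ (the product over prime divisors is empty and $|f|=1=\Phi(f)$).
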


Let $P\in \A$ be an irreducible polynomial.
When $q$ is odd, given a non-zero polynomial $g \in \A$ with $\gcd(g,P)=1$, as usual we define the \textit{Legendre symbol} $\left(\frac{g}{P}\right) = \pm 1$ such that
$$
\left(\frac{g}{P}\right) \equiv g^{\frac{|P|-1}{2}} \pmod P.
$$
Let $f\in \A$ be a non-constant polynomial with the prime factorization as in \eqref{eq:factor}. Given a non-zero polynomial $g \in \A$ with $\gcd(g,f)=1$, the \textit{Kronecker symbol} $\left(\frac{g}{f}\right) = \pm 1$ is defined as
$$
\left(\frac{g}{f}\right) = \prod_{j=1}^{t} \left(\frac{g}{P_j}\right)^{e_j}.
$$
As usual, for two non-zero polynomials $g_1, g_2 \in \A$ with $\gcd(g_1g_2, f)=1$, we have
$$
\left(\frac{g_1g_2}{f}\right) = \left(\frac{g_1}{f}\right) \left(\frac{g_2}{f}\right).
$$

\begin{lemma}[The reciprocity law]
\label{lem:reciprocity}
Let $f,g \in \A$ be relatively prime non-zero polynomials. Assume that $q$ is odd, and both $f$ and $g$ are monic. Then, we have
$$
\left(\frac{g}{f}\right) = (-1)^{\frac{q-1}{2}\deg f \deg g} \left(\frac{f}{g}\right).
$$
\end{lemma}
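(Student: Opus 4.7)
The plan is to reduce the claim to the case of two distinct monic prime polynomials by multiplicativity, then compute each Legendre symbol as a power of a resultant and exploit the antisymmetry of the resultant in its arguments.

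For the reduction step, write the prime factorizations $f = \prod_i P_i^{e_i}$ and $g = \prod_j Q_j^{a_j}$; by coprimality all $P_i$ are distinct from all $Q_j$, and the leading coefficients equal $1$ by monicity. The Kronecker symbol is multiplicative in the denominator by definition and in the numerator by the product rule recorded just before the lemma, so
$$
\left(\frac{g}{f}\right) = \prod_{i,j}\left(\frac{Q_j}{P_i}\right)^{e_i a_j}, \qquad \left(\frac{f}{g}\right) = \prod_{i,j}\left(\frac{P_i}{Q_j}\right)^{e_i a_j}.
$$
Since $\deg f \cdot \deg g = \sum_{i,j} e_i a_j \deg P_i \deg Q_j$, the full reciprocity will follow once it is proved for each pair of distinct monic primes $(P_i, Q_j)$.

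For the main case, let $P$ and $Q$ be distinct monic primes of degrees $m$ and $n$, and fix a root $\alpha$ of $P$ in a fixed algebraic closure of $\F_q$. Since $P$ is irreducible, the map $X \mapsto \alpha$ identifies $\A/P\A$ with $\F_q(\alpha) \cong \F_{q^m}$, so the definition of the Legendre symbol reads $\left(\frac{Q}{P}\right) = Q(\alpha)^{(q^m-1)/2}$, an element of $\F_q \subset \F_{q^m}$. The key manipulation is to write
$$
\frac{q^m-1}{2} = \frac{q-1}{2}\bigl(1 + q + q^2 + \cdots + q^{m-1}\bigr),
$$
and then use $Q(\alpha)^{q^i} = Q(\alpha^{q^i})$ (because $Q \in \F_q[X]$) together with the fact that the Frobenius orbit $\{\alpha, \alpha^q, \ldots, \alpha^{q^{m-1}}\}$ is exactly the zero set of the monic polynomial $P$. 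This yields
$$
Q(\alpha)^{1+q+\cdots+q^{m-1}} = \prod_{i=0}^{m-1} Q(\alpha^{q^i}) = \prod_{P(\alpha')=0} Q(\alpha') = \operatorname{Res}(P,Q) \in \F_q^*,
$$
and hence $\left(\frac{Q}{P}\right) = \operatorname{Res}(P,Q)^{(q-1)/2}$.

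The same computation with $P$ and $Q$ interchanged gives $\left(\frac{P}{Q}\right) = \operatorname{Res}(Q,P)^{(q-1)/2}$, and the elementary antisymmetry $\operatorname{Res}(Q,P) = (-1)^{mn}\operatorname{Res}(P,Q)$ contributes exactly the sign $(-1)^{mn(q-1)/2}$ between the two sides. Combining this with the reduction step gives the stated identity. The only technical point is the geometric-series factorization of the exponent $(q^m-1)/2$, which is what converts the Legendre symbol into a quadratic character applied to a resultant; once this identity is secured, reciprocity follows immediately from antisymmetry of the resultant.
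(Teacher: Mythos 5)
Your proof is correct. Note, however, that the paper does not prove this lemma at all: it is stated as a known background fact with a citation to Rossen's book (Chapter 3 of \emph{Number theory in function fields}), so there is no in-paper argument to compare against. The proof you give --- reduce to distinct monic primes by multiplicativity of the Kronecker/Legendre symbols, write $(q^m-1)/2 = \frac{q-1}{2}(1+q+\cdots+q^{m-1})$, use the Frobenius orbit of a root of $P$ to identify $Q(\alpha)^{1+q+\cdots+q^{m-1}}$ with $\operatorname{Res}(P,Q)$, and extract the sign from $\operatorname{Res}(Q,P)=(-1)^{mn}\operatorname{Res}(P,Q)$ --- is precisely the classical resultant proof, essentially the one given in the cited reference. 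All the steps check out: the degree identity $\deg f\deg g=\sum_{i,j}e_ia_j\deg P_i\deg Q_j$ makes the reduction legitimate, irreducibility over the perfect field $\F_q$ guarantees the roots of $P$ are exactly the distinct Frobenius conjugates of $\alpha$, and monicity removes the leading-coefficient factors from both the resultant and the Kronecker symbol.
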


The following lemma is essentially a special case of a result due to Artin \cite[Section 18, Equation (10)]{Artin}.

\begin{lemma}
\label{lem:Artin}
Assume that $q$ is odd. Let $P\in \A$ be a prime polynomial of odd degree. Denote by $h(-P)$ the class number of the quadratic function field $\F_q(X, \sqrt{-P})$. Then, we have
$$
h(-P)= \sum_{\substack{\textrm{$g$ monic} \\ 0 \le \deg g < \deg P}}
\left(\frac{g}{P}\right).
$$
\end{lemma}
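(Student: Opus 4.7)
My plan is to obtain the identity as a direct specialisation of Artin's analytic class number formula to the imaginary quadratic case. Since $\deg P$ is odd, so is $\deg(-P)$, hence the infinite place of $\F_q(X)$ is ramified in the quadratic extension $\F_q(X,\sqrt{-P})/\F_q(X)$. This makes $\F_q(X,\sqrt{-P})$ an imaginary quadratic function field, whose unit group collapses to $\F_q^*$ and whose class number admits a clean evaluation at $s=0$ of the associated Dirichlet $L$-function.

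First I would introduce the Kronecker symbol $\chi(g):=\left(\frac{g}{P}\right)$, extended by zero on multiples of $P$, as a non-trivial quadratic character modulo $P$, and form the associated $L$-series
$$
L(s,\chi)=\sum_{g\ \textrm{monic}}\chi(g)\,|g|^{-s}=\sum_{d\ge 0}A_d\, q^{-ds},\qquad A_d=\sum_{\substack{g\ \textrm{monic}\\ \deg g=d}}\chi(g).
$$
Next I would verify that $A_d=0$ for every $d\ge\deg P$. The division algorithm by $P$ writes each monic $g$ of degree $d$ uniquely as $g=Pq+r$ with $q$ monic of degree $d-\deg P$ and $r$ a polynomial of degree $<\deg P$ (or zero); hence as $g$ ranges over monic polynomials of degree $d$, its reduction modulo $P$ visits each residue class in $\A/P\A$ exactly $q^{d-\deg P}$ times. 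Since $\chi$ is non-trivial on $(\A/P\A)^*$ and vanishes at zero, orthogonality of characters forces $A_d=0$. Consequently $L(s,\chi)$ is a polynomial in $q^{-s}$, and setting $s=0$ gives
$$
L(0,\chi)=\sum_{\substack{g\ \textrm{monic}\\ 0\le\deg g<\deg P}}\chi(g).
$$

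Finally I would invoke Artin's class number formula, which in the imaginary quadratic case asserts exactly $h(-P)=L(0,\chi)$, and combine it with the evaluation above. The main obstacle I anticipate is bookkeeping around normalisations: Artin's original statement is phrased via the zeta function of $K=\F_q(X,\sqrt{-P})$, and one has to justify via the factorisation $\zeta_K(s)=\zeta_{\F_q(X)}(s)L(s,\chi)$, together with the triviality of the contributions from the (ramified) infinite place and from the unit group $\F_q^*$ in the imaginary setting, that the constant relating $h(-P)$ to $L(0,\chi)$ is precisely one. Once this identification is pinned down, the two displayed identities combine to yield the lemma.
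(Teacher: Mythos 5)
Your overall strategy --- reduce to Artin's class number formula for the imaginary quadratic function field $\F_q(X,\sqrt{-P})$ and evaluate the associated $L$-function at $s=0$ as a finite character sum --- is the same as the paper's, and your verification that the coefficients $A_d$ vanish for $d\ge\deg P$ is correct (the paper absorbs that step into its citation of Artin). But there is one genuine gap, and it sits exactly where the paper's proof does its only real work. The character appearing in the factorisation $\zeta_K(s)=\zeta_{\F_q(X)}(s)L(s,\chi)$ is determined by the splitting of primes in $K=\F_q(X,\sqrt{-P})$: an unramified prime $Q$ splits if and only if $-P$ is a square modulo $Q$, so the character of the extension is $g\mapsto\left(\frac{-P}{g}\right)$, not your $\chi(g)=\left(\frac{g}{P}\right)$. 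Artin's formula therefore gives, in the first instance,
$$
h(-P)=\sum_{\substack{\textrm{$g$ monic}\\ 0\le\deg g<\deg P}}\left(\frac{-P}{g}\right),
$$
and one must still convert $\left(\frac{-P}{g}\right)$ into $\left(\frac{g}{P}\right)$.

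That conversion is not a normalisation issue of the kind you anticipate; it is the quadratic reciprocity law. Writing $\left(\frac{-P}{g}\right)=\left(\frac{-1}{g}\right)\left(\frac{P}{g}\right)$, one has $\left(\frac{-1}{g}\right)=(-1)^{\frac{q-1}{2}\deg g}$ and, by Lemma \ref{lem:reciprocity}, $\left(\frac{P}{g}\right)=(-1)^{\frac{q-1}{2}\deg P\deg g}\left(\frac{g}{P}\right)$; since $\deg P$ is odd the two sign factors cancel, so $\left(\frac{-P}{g}\right)=\left(\frac{g}{P}\right)$ for every monic $g$ coprime to $P$. Thus the identity you want is true, but the hypothesis that $\deg P$ is odd is used twice --- once to make the field imaginary and once in this reciprocity step --- whereas your write-up uses it only the first time. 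As it stands, your argument establishes the formula with $\left(\frac{-P}{g}\right)$ in the summand and silently relabels that symbol as $\left(\frac{g}{P}\right)$; supplying the reciprocity computation closes the gap and recovers the paper's proof.
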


\begin{proof}
Following Artin \cite{Artin} the quadratic function field $\F_q(X, \sqrt{-P})$ is imaginary
(see also \cite[Proposition 14.6]{Rosen} and the discussions therein).
So, using \cite[Section 18, Equation (10)]{Artin} and \cite[Section 17, Equation (4)]{Artin} directly
(alternatively, applying arguments similar to those in the proof of \cite[Theorem 16.8]{Rosen}),
we obtain
$$
h(-P)= \sum_{\substack{\textrm{$g$ monic} \\ 0 \le \deg g < \deg P}}
\left(\frac{-P}{g}\right).
$$
Here one should note that since $-P$ can be viewed as a monic polynomial with respect to $-X$, we can apply the results in \cite{Artin}.

For each summation term in the above formula, using the reciprocity law (Lemma \ref{lem:reciprocity}) and noticing that $\deg P$ is odd, we deduce that
$$
\left(\frac{-P}{g}\right) = (-1)^{\frac{(q-1)\deg g}{2}} \left(\frac{P}{g}\right) = \left(\frac{g}{P}\right),
$$
which implies the desired result.
\end{proof}

Finally, we reproduce a useful result due to Miller \cite[Section 1]{Miller}, which can yield a simple proof of \eqref{eq:Gauss}.

\begin{lemma}
\label{lem:Miller}
Let $A$ be an abelian group. Then, the product $\prod_{a \in A} a$
is the identity element if $A$ either has no element of order two or has more than one element of order two,
otherwise the product is equal to the element of order two.
\end{lemma}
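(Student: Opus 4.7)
The plan is to exploit the involution $a \mapsto a^{-1}$ on $A$, which is well-defined precisely because $A$ is abelian. Grouping each element $a$ with $a \ne a^{-1}$ together with its inverse, every such pair contributes the identity $e$ to the product $\prod_{a\in A}a$. The leftover factors come from the fixed points of the involution, i.e., the elements satisfying $a^2=e$; these form the $2$-torsion subgroup $B=\{a\in A:a^2=e\}$. Thus the computation collapses to $\prod_{a\in A} a=\prod_{b\in B} b$, and the whole lemma becomes a statement about this single subgroup.

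From here I split into three cases. If $B=\{e\}$, then $A$ has no element of order two, and the product is clearly $e$. If $B=\{e,t\}$, then $A$ has exactly one element of order two and the product is $t$, which matches the exceptional alternative in the statement. The remaining case is $|B|\ge 4$, i.e.\ when $A$ has more than one element of order two; here the task is to show $\prod_{b\in B}b=e$, and this is the only part that requires real work.

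For the substantive case, since every element of $B$ satisfies $b^2=e$, the group $B$ is elementary abelian of order $2^n$ with $n\ge 2$. The natural move is to translate rather than to invert: pick any $b_0\in B\setminus\{e\}$ and consider the map $b\mapsto b_0b$. It is an involution on $B$ with no fixed points (a fixed point would force $b_0=e$), so it partitions $B$ into $|B|/2$ pairs $\{b,b_0b\}$. Each pair contributes $b\cdot(b_0b)=b_0b^2=b_0$ to the product, giving
\[
\prod_{b\in B}b = b_0^{|B|/2}.
\]
Since $n\ge 2$, the exponent $|B|/2=2^{n-1}$ is even, and $b_0^2=e$ then forces $b_0^{|B|/2}=e$.

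The main obstacle is just this last case, and the one idea needed is to switch from pairing by inverses (which fails trivially inside $B$, where every element is self-inverse) to pairing by translation by a fixed nontrivial involution. Everything else is bookkeeping.
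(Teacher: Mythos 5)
Your proof is correct and complete. Note that the paper itself gives no proof of this lemma at all --- it simply cites Miller's 1903 paper --- so there is nothing in the text to compare against; your argument (reduce via the inversion pairing to the $2$-torsion subgroup $B$, then in the case $|B|\ge 4$ pair by translation by a fixed involution $b_0$ to get $\prod_{b\in B}b=b_0^{|B|/2}=e$) is the standard one and is carried out without gaps. The only cosmetic remark is that the statement implicitly assumes $A$ is finite (otherwise the product is undefined), which your argument also tacitly uses; you might state that assumption explicitly at the outset.
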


\section{Congruence formulas}

In this section, our main objective is to generalize \eqref{eq:Gauss} to $G(f)$ by following the approach in \cite{Miller}.
One can see that we have two quite different cases depending on
the characteristic of $\A$ (that is, $p$).

We first remark that it is known that for any irreducible polynomial $P\in \A$, we have
(for instance see \cite[Chapter 1, Corollary 2]{Rosen})
$$
G(P) \equiv -1 \pmod P,
$$
which is an analogue of \eqref{eq:Wilson}. Besides, it is easy to see that
if non-zero $f\in \A$ is reducible and square-free, then we have
$$
\prod_{\substack{g\in \A \\ 0\le \deg g < \deg f}} g \equiv 0 \pmod f.
$$

\begin{theorem}
\label{thm:con1}
Assume that $p$ is odd. Then, for any polynomial $f \in \A$ of degree $\deg f \ge 1$, we have
\begin{equation*}
\begin{split}
G(f)
& \equiv \left\{\begin{array}{ll}
-1 \pmod f & \textrm{if $f$ has only one prime divisor,}\\
\\
1 \pmod f &\textrm{otherwise}.
\end{array}
\right.
\end{split}
\end{equation*}
\end{theorem}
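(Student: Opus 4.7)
The plan is to reduce the statement to a counting of order-two elements in the unit group $(\A/f\A)^*$ and then apply Miller's lemma (Lemma \ref{lem:Miller}). First I would observe that every residue class in $(\A/f\A)^*$ contains exactly one polynomial $g\in\A$ with $0\le \deg g<\deg f$, so reducing modulo $f$ identifies $G(f)$ with the product of all elements of the abelian group $(\A/f\A)^*$. Therefore $G(f)\bmod f$ is either the identity or the unique element of order two of $(\A/f\A)^*$, depending on whether that group has zero, one, or more than one involution.

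Next I would count involutions using the structure results. By Lemma \ref{lem:structure1}, if $f$ has prime factorization $f=aP_1^{e_1}\cdots P_t^{e_t}$, then
\[
(\A/f\A)^*\cong\prod_{j=1}^t (\A/P_j^{e_j}\A)^*.
\]
By Lemma \ref{lem:structure2}, each local factor splits as $(\A/P_j\A)^*\times (\A/P_j^{e_j}\A)^{(1)}$, where the first factor is cyclic of order $|P_j|-1$ and the second is a $p$-group. Because $q$ is odd, so is every $|P_j|=q^{\deg P_j}$, hence $|P_j|-1$ is even and the cyclic factor carries exactly one element of order two; meanwhile the $p$-group factor (with $p$ odd) contains no element of order two. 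So each $(\A/P_j^{e_j}\A)^*$ has exactly one involution, and it is easy to check it is the residue class of $-1$.

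Now the case split is immediate. If $f$ has exactly one prime divisor $P$ (that is, $t=1$), then $(\A/f\A)^*$ has a unique element of order two, namely $-1\bmod f$, and Lemma \ref{lem:Miller} gives $G(f)\equiv -1\pmod f$. If $f$ has $t\ge 2$ distinct prime divisors, then the product group $(\A/f\A)^*$ contains $2^t-1\ge 3$ elements of order two (any non-identity choice of involutions in the coordinates), so by Lemma \ref{lem:Miller} we get $G(f)\equiv 1\pmod f$.

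There is no real obstacle here; the argument is a straightforward bookkeeping once the structure lemmas of Section~2 are in hand. The only point that deserves a sentence in the write-up is the identification of $G(f)\bmod f$ with the product over $(\A/f\A)^*$, together with the remark that the unit $a\in\F_q^*$ appearing in the factorization of $f$ does not affect the reduced ring, so one may assume $f$ is monic when convenient.
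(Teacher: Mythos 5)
Your proposal is correct and follows essentially the same route as the paper: reduce $G(f)$ modulo $f$ to the product over $(\A/f\A)^*$, count involutions via Lemmas \ref{lem:structure1} and \ref{lem:structure2} (exactly one per local factor since $p$ is odd, hence a unique involution iff $t=1$), and conclude with Lemma \ref{lem:Miller}. The paper's proof is just a terser version of the same argument.
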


\begin{proof}
Note that $p$ is odd. Then, for any irreducible polynomial $P\in \A$ and integer $e\ge 1$,
by Lemma \ref{lem:structure2} the group $(\A/P^e\A)^*$ has only one element of order two (that is, $-1$).
So, applying Lemma \ref{lem:structure1}, it is easy to see that
the abelian group $(\A/f\A)^*$ has only one element of order two if and only if $f$ has only one prime divisor. Then, the desired result now follows from Lemma \ref{lem:Miller}.
\end{proof}

\begin{theorem}
\label{thm:con2}
Assume that $p=2$. For any polynomial $f\in \A$ of degree $\deg f \ge 1$ with the prime factorization as in \eqref{eq:factor},
 we have
\begin{equation*}
\begin{split}
G(f)
& \equiv \left\{\begin{array}{ll}
f/P_1 +1 \pmod f & \textrm{if $q=2, \deg P_1=1, 2\le e_1 \le 3$, and all the }\\
&  \quad \textrm{other exponents $e_j=1$ if they exist, }\\
1 \pmod f &\textrm{otherwise}.
\end{array}
\right.
\end{split}
\end{equation*}
\end{theorem}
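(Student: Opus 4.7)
The plan is to apply Lemma \ref{lem:Miller} to the abelian group $(\A/f\A)^*$, just as in the proof of Theorem \ref{thm:con1}, but with a more delicate analysis of the $2$-torsion, since in characteristic $2$ the local $p$-groups $(\A/P^e\A)^{(1)}$ may contribute several elements of order $2$. I need to determine exactly when $(\A/f\A)^*$ contains a unique element of order $2$, and in that case to identify that element explicitly.

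I would first invoke Lemma \ref{lem:structure1} and Lemma \ref{lem:structure2} to write $(\A/f\A)^* \cong \prod_{j=1}^t \bigl((\A/P_j\A)^* \times (\A/P_j^{e_j}\A)^{(1)}\bigr)$. Since $|P_j|-1 = q^{\deg P_j}-1$ is odd in characteristic $2$, the cyclic factors $(\A/P_j\A)^*$ carry no $2$-torsion, so every element of order $2$ comes from the $(1)$-factors. The total number of solutions of $u^2=1$ in $(\A/f\A)^*$ is therefore the product of the counts in each $(\A/P_j^{e_j}\A)^{(1)}$.

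The central computation is the count of solutions to $u^2=1$ in a single factor $(\A/P^e\A)^{(1)}$. Writing $u = 1+aP$ and squaring in characteristic $2$ gives $(1+aP)^2 = 1+a^2P^2$, so the condition becomes $P^{e-2} \mid a^2$, i.e.\ $P^{\lceil (e-2)/2 \rceil} \mid a$. A short enumeration yields exactly $|P|^{\lceil (e-1)/2 \rceil}$ solutions when $e \ge 2$. Requiring the product of these counts (with the convention that $e_j=1$ contributes $1$) to equal $2$ forces exactly one exponent $e_j \ge 2$, and then $|P_j|^{\lceil (e_j-1)/2 \rceil} = 2$ forces $q=2$, $\deg P_j = 1$, and $e_j \in \{2,3\}$. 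In all other cases Lemma \ref{lem:Miller} yields $G(f) \equiv 1 \pmod f$ at once.

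In the exceptional case I would propose the candidate $f/P_1 + 1$ and verify it directly. Squaring in characteristic $2$ yields $(f/P_1+1)^2 = f^2/P_1^2 + 1$, and $f \mid f^2/P_1^2$ because $2e_1 - 2 \ge e_1$ (using $e_1 \ge 2$) and $2e_j \ge e_j$ trivially. Since $\deg(f/P_1) < \deg f$, the element $f/P_1 + 1$ is not $\equiv 1 \pmod f$, and reducing it modulo each prime divisor of $f$ shows it is coprime to $f$. By the uniqueness just established, $f/P_1+1$ must be the order-$2$ element delivered by Lemma \ref{lem:Miller}. The main obstacle I anticipate is the $2$-torsion count in $(\A/P^e\A)^{(1)}$, where the characteristic-$2$ squaring map has to be translated into the precise divisibility $P^{\lceil (e-2)/2 \rceil} \mid a$ and the resulting count pinned down.
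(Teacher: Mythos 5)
Your proposal is correct and follows essentially the same route as the paper: decompose $(\A/f\A)^*$ via Lemmas \ref{lem:structure1} and \ref{lem:structure2}, observe that the cyclic parts have odd order, count the $2$-torsion in each $(\A/P^e\A)^{(1)}$ using the characteristic-$2$ identity $(1+u)^2=1+u^2$ (your count $|P|^{\lceil (e-1)/2\rceil}$ of solutions to $u^2=1$ matches the paper's count $q^{\lfloor e/2\rfloor\deg P}-1$ of elements of order exactly two), deduce that uniqueness forces $q=2$, $\deg P_1=1$, $e_1\in\{2,3\}$ with all other exponents $1$, and finish with Lemma \ref{lem:Miller} after verifying that $f/P_1+1$ is the order-two element. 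No gaps.
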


\begin{proof}
We first remark that $-1$ is not an element of order two, because $p=2$.
We also note that for any irreducible polynomial $P\in \A$, the order of
 $(\A/P\A)^*$ is an odd number, and thus the group $(\A/P\A)^*$ has no element of order two.

Let $P\in \A$ be an irreducible polynomial, and $e$ a positive integer.
If $h$ is an element of order two of $(\A/P^e\A)^*$, then $P^e \mid (h+1)^2$.
So, it is easy to see that when $e$ is even, the set of elements of order two of $(\A/P^e\A)^*$ is
$$
\Big\{gP^{e/2} +1:\, g\in \A, 0\le \deg g < \frac{e}{2}\deg P \Big\},
$$
whose cardinality is $q^{\frac{1}{2}e\deg P}-1$. Similarly, if $e$ is odd and greater than 1, then
the set of elements of order two of $(\A/P^e\A)^*$ is
$$
\Big\{gP^{(e+1)/2} +1:\, g\in \A, 0\le \deg g < \frac{e-1}{2}\deg P \Big\},
$$
whose cardinality is $q^{\frac{1}{2}(e-1)\deg P}-1$.
Thus, $(\A/P^e\A)^*$ has only one element of order two if and only if $\deg P=1, 2\le e \le 3$, and $q=2$.

Hence, the desired result follows by using Lemma \ref{lem:structure1} and Lemma \ref{lem:Miller}, and noticing
that $f/P_1 +1$ is indeed an element of order two of $(\A/f\A)^*$ if $e_1 \ge 2$.
\end{proof}

Furthermore, we can get a congruence identity for $G(n,f)$ when $n \ge \deg f$. This can also be viewed as an analogue of \eqref{eq:Gauss}.

\begin{theorem}
\label{thm:con3}
For any polynomial $f\in \A$ of degree $\deg f \ge 1$,  if the integer $n$ satisfies $n\ge \deg f$, we have
\begin{equation*}
\begin{split}
G(n,f)
& \equiv \left\{\begin{array}{ll}
-1 \pmod f & \textrm{if $p$ is odd and $f$ has only one prime divisor,}\\
\\
1 \pmod f &\textrm{otherwise}.
\end{array}
\right.
\end{split}
\end{equation*}
\end{theorem}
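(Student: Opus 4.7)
The plan is to reduce $G(n,f)$ to a power of $G(f) \pmod f$ by grouping factors according to their residue class modulo $f$, and then to invoke Theorems \ref{thm:con1} and \ref{thm:con2}.

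First I would partition the set $S_n = \{g\in\A : 0\le \deg g \le n,\ \gcd(g,f)=1\}$ into fibres over residue classes. If $g \equiv h \pmod f$, then $\gcd(g,f)=\gcd(h,f)$, so the relevant residues are exactly the elements of $(\A/f\A)^*$. For each such class, represented uniquely by $h\in\A$ with $0\le \deg h<\deg f$ (note $h\ne 0$ since $\gcd(h,f)=1$), the polynomials $g\in S_n$ with $g\equiv h\pmod f$ are those of the form $g=h+uf$ with $u\in\A$ satisfying $\deg(h+uf)\le n$. Since $\deg h<\deg f\le n$, counting $u=0$ together with the non-zero $u$ of degree at most $n-\deg f$ gives exactly $q^{\,n-\deg f+1}$ choices. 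The key point is that this count is \emph{independent} of the residue class $h$.

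Setting $N=q^{\,n-\deg f+1}$, this yields
\begin{equation*}
G(n,f) \;\equiv\; \prod_{h\in(\A/f\A)^*} h^{N} \;=\; G(f)^{N} \pmod{f}.
\end{equation*}
Since $n\ge \deg f$ and $q=p^s$, $N$ is a positive power of $p$. Now I would split into cases according to the parity of $p$ and apply the previous theorems. When $p$ is odd, $N$ is odd, so Theorem \ref{thm:con1} gives $G(n,f)\equiv (-1)^N=-1$ if $f$ has a single prime divisor, and $1^N=1$ otherwise. When $p=2$ and $f$ is not in the exceptional family of Theorem \ref{thm:con2}, we have $G(f)\equiv 1\pmod f$ and the conclusion $G(n,f)\equiv 1\pmod f$ is immediate.

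The only delicate case, and the main (mild) obstacle, is $p=2$ with $f$ in the exceptional family, where $G(f)\equiv f/P_1+1\pmod f$ is a non-trivial element. Here I would observe that $N=2^{s(n-\deg f+1)}\ge 2$, so it suffices to show $(f/P_1+1)^2\equiv 1\pmod f$. Using characteristic~$2$,
\begin{equation*}
(f/P_1+1)^2 \;=\; (f/P_1)^2 + 1 \pmod{f},
\end{equation*}
and since $\deg P_1=1$ and $e_1\in\{2,3\}$, one checks directly that $P_1^{2(e_1-1)}$ is divisible by $P_1^{e_1}$ (as $2(e_1-1)\ge e_1$ for $e_1\ge 2$). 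Writing $f=P_1^{e_1}Q$ with $\gcd(P_1,Q)=1$, we get $(f/P_1)^2=P_1^{2(e_1-1)}Q^2$, which is divisible by $P_1^{e_1}Q=f$, so $(f/P_1)^2\equiv 0\pmod f$. Hence $(f/P_1+1)^N\equiv 1\pmod f$, and the theorem follows.
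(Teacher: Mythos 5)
Your proof is correct and follows essentially the same route as the paper: both arguments reduce $G(n,f)$ to $G(f)^{q^{\,n+1-\deg f}} \pmod f$ by counting the polynomials of degree at most $n$ lying in each unit residue class modulo $f$, and then invoke Theorems \ref{thm:con1} and \ref{thm:con2}. Your explicit check that $(f/P_1+1)^2\equiv 1\pmod f$ in the characteristic-two exceptional case is a detail the paper leaves implicit (it is already noted at the end of the proof of Theorem \ref{thm:con2} that $f/P_1+1$ has order two), but the substance is identical.
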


\begin{proof}
Fix an arbitrary integer $m \ge \deg f$. For any polynomial $g\in \A$ with $\deg g < \deg f$ and $\gcd(g,f)=1$,
it is easy to see that the set of polynomials in $\A$ of degree $m$ and congruent to $g$ modulo $f$ is
$$
\Big\{g+fr: r\in \A, \deg r = m - \deg f  \Big\},
$$
whose cardinality is $(q-1)q^{m- \deg f}$.

Thus, we obtain
\begin{align*}
G(n,f) & \equiv G(f)\prod_{m= \deg f}^{n} G(f)^{(q-1)q^{m- \deg f}} \\
& \equiv G(f)^{q^{n+1-\deg f}} \pmod f.
\end{align*}
We then conclude the proof by using Theorem \ref{thm:con1} and Theorem \ref{thm:con2}.
\end{proof}

\section{Multiplicative orders}

As mentioned before, when the integer $n\ge 3$ is odd and $N=(n-1)/2$, Cosgrave and Dilcher have determined the multiplicative order of the Gauss factorial \eqref{eq:CD} modulo $n$ in \cite[Theorem 2]{CD2008}. That is, they only considered \emph{half} of the positive integers less than $n$. In this section, assume that $q$ is odd, then our aim is to get some similar results concerning a special divisor of $G(f)$. For this divisor, we only consider \emph{half} of the polynomials of degree less than $\deg f$ and coprime to $f$.

For any non-zero $g \in \A$, we denote by $\sgn(g)$ the leading coefficient of $g$, which is called the \emph{sign} of $g$. Let $S$ be a subset of $\F_q^*$ such that
$|S|= (q-1)/2$ and for any $\alpha \in \F_q^*$ if $\alpha \in S$ then $-\alpha \not\in S$.
Obviously, the set $S$ has $2^{(q-1)/2}$ choices.

Define $\delta(S) = \prod_{a \in S} a.$
Notice that
\begin{equation}
\label{eq:delta}
\delta(S) ^ 2 = (-1)^{\frac{q-1}{2}} \prod_{a\in \F_q^*} a = (-1)^{\frac{q+1}{2}}.
\end{equation}
So, if $q\equiv 3 \pmod 4$, we have $\delta(S) ^ 2 = 1$, and then
$\delta(S)=\pm 1$.

Note that for any non-zero $g \in \A$,  $\sgn(g) \in S$ if and only if $\sgn(-g) \not \in S$. So, for any polynomial $f \in \A$ of degree $\deg f \ge 1$, we easily have
$$
G(f)
 = (-1)^{\Phi(f)/2} G(f,S)^2,
$$
where
$$
G(f,S) = \prod_{\substack{g\in \A, \sgn(g) \in S \\ 0 \le \deg g < \deg f \\ \gcd(g,f)=1}} g.
$$
Thus, by Theorem \ref{thm:con1} we obtain
\begin{equation*}
\begin{split}
G(f,S)^2
& \equiv \left\{\begin{array}{ll}
-(-1)^{\Phi(f)/2} \pmod f & \textrm{if $f$ has only one prime divisor,}\\
\\
(-1)^{\Phi(f)/2} \pmod f &\textrm{otherwise}.
\end{array}
\right.
\end{split}
\end{equation*}
This implies that the multiplicative order of $G(f,S)$ modulo $f$ only can possibly be 1, 2 or 4.

Here, we want to determine the multiplicative order of $G(f,S)$ modulo $f$, which is denoted by
$\ord_f G(f,S)$. The main result is as follows.

\begin{theorem}
\label{thm:extension}
Assume that $q$ is odd, and let $f \in \A$ be a polynomial having the prime factorization as in \eqref{eq:factor}. Then

\begin{enumerate}

\item[(1)]$\ord_f G(f,S)=4$ when $t=1$, and either $q \equiv 1 \pmod 4$ or $\deg P_1$ is even.

\item[(2)] $\ord_f G(f,S)=2$ when
\begin{enumerate}
\item $t=1$, $q \equiv 3 \pmod 4$, $\deg P_1$ is odd, $\delta(S)=1$, and $e_1 + \frac{1}{2}(h(-P_1)-3) \equiv 1 \pmod 2$, or

\item $t=1$, $q \equiv 3 \pmod 4$, $\deg P_1$ is odd, $\delta(S)=-1$, and $e_1 + \frac{1}{2}(h(-P_1)-3) \equiv 0 \pmod 2$, or

\item $t=2$, $q \equiv 1 \pmod 4$, and $P_1$ is not a quadratic residue modulo $P_2$, or

\item $t=2$, $q \equiv 3 \pmod 4$, both $\deg P_1$ and $\deg P_2$ are even, and $P_1$ is not a quadratic residue modulo $P_2$, or

\item $t=2$, $q \equiv 3 \pmod 4$, and either $\deg P_1$ or $\deg P_2$ is odd;
\end{enumerate}

\item[(3)] $\ord_f G(f,S)=1$ in all other cases.
\end{enumerate}
\end{theorem}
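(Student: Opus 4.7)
The plan is to build on the identity
\[
G(f,S)^2 \equiv \begin{cases} -(-1)^{\Phi(f)/2} \pmod f, & \text{if } t = 1, \\ (-1)^{\Phi(f)/2} \pmod f, & \text{if } t \ge 2, \end{cases}
\]
derived just before the theorem statement, which already forces $\ord_f G(f,S) \in \{1,2,4\}$. The proof first separates order $4$ from order $\le 2$, and within the latter computes the sign of $G(f,S)$ modulo each $P_j^{e_j}$ to distinguish order $1$ from order $2$.

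For $t \ge 2$, each $\Phi(P_j^{e_j}) = |P_j|^{e_j-1}(|P_j|-1)$ is even (since $q$ is odd), so $2^t \mid \Phi(f)$; hence $\Phi(f)/2$ is even, giving $G(f,S)^2 \equiv 1 \pmod f$ and order $\le 2$. For $t = 1$ the parity of $\Phi(f)/2 = q^{(e_1-1)\deg P_1}(q^{\deg P_1}-1)/2$ matches that of $(q^{\deg P_1}-1)/2$, which is even precisely when $q^{\deg P_1} \equiv 1 \pmod 4$, i.e.\ when $q \equiv 1 \pmod 4$ or $\deg P_1$ is even. This yields case~(1) directly.

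When the order is at most $2$, Lemmas \ref{lem:structure1} and \ref{lem:structure2} (together with $p$ odd, so each $p$-group part has no element of order $2$) give $G(f,S) \equiv \epsilon_j \pmod{P_j^{e_j}}$ for unique $\epsilon_j \in \{\pm 1\}$, and the order is $1$ iff every $\epsilon_j = 1$. The key computational identity is
\[
G(f,S) = \delta(S)^{|M|}\bigl(G^{\mathrm{mon}}(f)\bigr)^{(q-1)/2},
\]
where $M$ is the set of monic polynomials of degree $<\deg f$ coprime to $f$ (so $|M| = \Phi(f)/(q-1)$) and $G^{\mathrm{mon}}(f) = \prod_{m \in M} m$. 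Applying the Legendre symbol $(\cdot/P_j)$ reduces the task to evaluating $\prod_{m \in M}(m/P_j)$, which I would compute by splitting over $\deg m$: for sufficiently large $d$, a CRT count equidistributes the admissible $m \bmod P_j$ over $(\A/P_j\A)^*$, so that degree contributes a clean power of $(-1)^{(|P_j|-1)/2}$; the remaining small-degree contribution injects the class number $h(-P_j)$ via Lemma \ref{lem:Artin} in the $t=1$ case, and the cross Kronecker symbol $(P_i/P_j)$ in the $t=2$ case (equivalent to $(P_j/P_i)$ by Lemma \ref{lem:reciprocity} under the hypotheses of (2)(c),(d)).

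Assembling these ingredients and using \eqref{eq:delta} (which gives $\delta(S) = \pm 1$ exactly when $q \equiv 3 \pmod 4$) produces the stated criteria: for $t=1$ the calculation yields $(G(f,S)/P_1) = \delta(S)\cdot(-1)^{e_1 + (h(-P_1)-3)/2}$, whence (2)(a),(b); for $t=2$ the sign $\epsilon_j$ is governed by $(P_i/P_j)$ when $|P_j|\equiv 1 \pmod 4$, giving (2)(c),(d), while an odd $\deg P_i$ together with $q \equiv 3 \pmod 4$ produces the unconditional order-$2$ situation of (2)(e); for $t \ge 3$, the same machinery (with extra factors of $2$ accumulating in $|M|$ and in each Legendre-symbol contribution) forces every $\epsilon_j = 1$, leaving case (3). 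The hardest step will be handling primes $P_j$ with $|P_j| \equiv 1 \pmod 4$, where $(\cdot/P_j)$ is identically $+1$ on $\{\pm 1\}$ and so cannot on its own detect the sign $\epsilon_j$; there one must compute $G^{\mathrm{mon}}(f) \pmod{P_j}$ directly as $\prod_{r \in (\A/P_j\A)^*} r^{N_r}$ via CRT residue counts $N_r$ and extract the sign as a power of $-1$. The bookkeeping for the $t=2$ case, in aligning these calculations with the precise parities stated in the theorem, will be the most intricate portion of the argument.
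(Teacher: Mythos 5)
Your skeleton matches the paper's: reduce via $G(f,S)^2$ to the order-$4$ case (your part (1) argument is essentially the paper's \eqref{eq:G2} and Theorem \ref{thm:order4}), then determine the local signs $\epsilon_j=G(f,S)\bmod P_j^{e_j}$ through the factorization $G(f,S)=\delta(S)^{\Phi(f)/(q-1)}M(f)$ of \eqref{eq:GM}. Your $t=1$ computation (large degrees equidistributing, small degrees contributing $(-1)^{N}$ with $N\equiv\frac12(h(-P_1)-1)\pmod 2$ via Lemma \ref{lem:Artin}) is a sound route and reproduces Lemmas \ref{lem:MP} and \ref{lem:MPe}; note only that passing from the sign modulo $P_1$ to the sign modulo $P_1^{e_1}$ needs the lifting step (Lemma \ref{lem:lift}), which you use implicitly.

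The genuine gap is exactly where you flag ``the most intricate portion'': the cases $t=2$ with $|P_j|\equiv 1\pmod 4$, i.e.\ (2)(c) and (2)(d) --- precisely the cases where the answer is $\big(\frac{P_1}{P_2}\big)$ rather than a sign you can read off from parities. There your proposed method --- compute $\prod_{r}r^{N_r}$ from CRT residue counts $N_r$ and ``extract the sign as a power of $-1$'' --- does not obviously work: when $-1$ is a square modulo $P_j$, knowing the $N_r$ modulo $2$ (which is all that equidistribution-plus-parity bookkeeping gives you) cannot decide whether $\prod_r r^{N_r(q-1)/2}$ is $+1$ or $-1$; you would need the $N_r$ modulo the full order of $r$, or some extra structure. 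The paper resolves this with a different device (Lemma \ref{lem:reduction}): a multiply-and-divide completion over the multiples of $P_t$ that expresses $M(f)$ modulo $P_1^{e_1}\cdots P_{t-1}^{e_{t-1}}$ as an explicit power $P_t^{-\Phi(\cdot)/2}$ times a computable sign, whence the symbol $\big(\frac{P_t}{P_1}\big)$ appears by Euler's criterion and the theorem's cases (2)(c)--(e) and (3) for $t\ge 3$ follow from reciprocity and the Chinese Remainder Theorem (Theorems \ref{thm:P1P2} and \ref{thm:P1P2P3}). As it stands your argument establishes part (1) and, with the computation carried out, the $t=1$ cases (2)(a)--(b), but the two-or-more-prime-divisor cases are asserted rather than proved, and the substitute you sketch for the hard subcase is not a workable replacement for the paper's Lemma \ref{lem:reduction}.
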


From Theorem \ref{thm:extension} one can see that for many cases $\ord_f G(f,S)$ is independent of the choice of $S$. Especially some results are related to the class numbers of function fields.
Actually, we do more in the paper: the value of $G(f,S)$ modulo $f$ is either explicitly given or easily computable.

Before proving Theorem \ref{thm:extension}, we illustrate an example to confirm some results in Theorem \ref{thm:extension} by using the computer algebra system PARI/GP \cite{Pari}. Choose $q=3$ and $P=X^3+2X+2$, then by Lemma \ref{lem:Artin} we have $h(-P)=7$. If furthermore we choose $S=\{1\}$, we have $\delta(S)=1$ and $G(P,S)=-1$, and thus $\ord_P G(P,S)=2$, which is compatible with Theorem \ref{thm:extension} (2); otherwise if we choose $S=\{-1\}$, we get $\ord_P G(P,S)=1$, which is also consistent with Theorem \ref{thm:extension} (3).

In the following, we divide the proof of Theorem \ref{thm:extension} into several cases.

\subsection{One prime divisor}

We continue the general discussion about $G(f,S)$.

Suppose that $f$ has the prime factorization as in \eqref{eq:factor}. Then, by Lemma \ref{lem:Euler} we get
$$
\Phi(f) = \prod_{i=1}^{t} q^{(e_i-1)\deg P_i} (q^{\deg P_i}-1).
$$
Note that $q$ is odd, so
$$
(-1)^{\frac{1}{2}\Phi(f)}= (-1)^{\frac{1}{2} \prod_{i=1}^{t} (q^{\deg P_i}-1)}.
$$

If $t\ge 2$, that is, $f$ has at least two distinct prime divisors, then we have
$(-1)^{\frac{1}{2}\Phi(f)}=1$, and thus
$$
G(f,S)^2 \equiv 1 \pmod f.
$$

Now assume that $t=1$, that is, $f$ has only one prime divisor $P_1$. Then it is easy to see that
\begin{equation*}
\begin{split}
G(f,S)^2
& \equiv \left\{\begin{array}{ll}
-1  & \textrm{if $q\equiv 1 \pmod 4$, or $\deg P_1$ is even,}\\
\\
1 &\textrm{otherwise}.
\end{array}
\right.
\end{split}
\end{equation*}

Thus, for any $f\in \A$ of positive degree and with the prime factorization as in \eqref{eq:factor}, we have
\begin{equation}
\label{eq:G2}
\begin{split}
G(f,S)^2
& \equiv \left\{\begin{array}{ll}
-1 \pmod f & \textrm{if $t=1$, and either $q\equiv 1$ (mod 4) or $\deg P_1$ is even,}\\
\\
1 \pmod f &\textrm{otherwise}.
\end{array}
\right.
\end{split}
\end{equation}

The above equation immediately gives the following partial result:

\begin{theorem}
\label{thm:order4}
Assume that $q$ is odd. If $f$ has only one prime divisor $P$, and either $q \equiv 1 \pmod 4$ or $P$ has even degree, then
$\ord_f G(f,S)=4$. Otherwise, $\ord_f G(f,S)=1$ or $2$.
\end{theorem}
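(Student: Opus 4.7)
The plan is to read off Theorem \ref{thm:order4} directly from equation \eqref{eq:G2}, which has just been established. All the combinatorial work (distinguishing the cases by $t$, by the parity of $\deg P_1$, and by $q \pmod 4$) is already embedded in \eqref{eq:G2}; what remains is only the elementary step of translating a square identity into a statement about multiplicative order.

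First I would handle the case where $f$ has a single prime divisor $P$ and either $q \equiv 1 \pmod 4$ or $\deg P$ is even. Here \eqref{eq:G2} gives $G(f,S)^2 \equiv -1 \pmod f$. Since $q$ is odd, we have $2 \in \F_q^*$, hence $-1 \not\equiv 1 \pmod f$ (because $\deg f \ge 1$ so $f$ cannot divide the nonzero constant $2$). Therefore $G(f,S)^2 \not\equiv 1 \pmod f$, which rules out $\ord_f G(f,S) \in \{1,2\}$. Squaring once more yields $G(f,S)^4 \equiv 1 \pmod f$, so the order divides $4$ and must equal $4$.

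In the complementary case, \eqref{eq:G2} says $G(f,S)^2 \equiv 1 \pmod f$, so $\ord_f G(f,S)$ divides $2$ and is therefore $1$ or $2$. This gives the second assertion of the theorem.

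There is no real obstacle here; the only subtle point is the small verification that $-1 \not\equiv 1 \pmod f$, which is immediate from $q$ being odd. The deeper work, namely identifying \emph{which} of $1$ or $2$ occurs in the residual case, is precisely the content of Theorem \ref{thm:extension}, and will be carried out in the subsequent subsections by analyzing $G(f,S) \bmod f$ more carefully using the reciprocity law (Lemma \ref{lem:reciprocity}) and Artin's class number formula (Lemma \ref{lem:Artin}).
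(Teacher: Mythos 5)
Your proposal is correct and matches the paper exactly: the paper derives Theorem \ref{thm:order4} as an immediate consequence of \eqref{eq:G2}, with the same translation from $G(f,S)^2 \equiv \mp 1 \pmod f$ to the order being $4$ or dividing $2$. Your extra remark that $-1 \not\equiv 1 \pmod f$ because $q$ is odd and $\deg f \ge 1$ is a correct (and harmless) filling-in of a detail the paper leaves implicit.
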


For further deductions, we need to get a new expression of $G(f,S)$. Fix an arbitrary polynomial $f\in \A$ of degree $\deg f \ge 1$.
Let $i_{n}$ be the cardinality of the set $\{g\in \A: \, g \ \textrm{monic},\deg g =n, \gcd(g,f)=1\}$. Note that
$$
\sum_{n=0}^{\deg f-1} i_n = \frac{\Phi(f)}{q-1}.
$$

Denote 
$$
M(f) = \Big(  \prod_{\substack{g \textrm{ monic} \\ 0 \le \deg g < \deg f, \,\gcd(g,f)=1}} g \Big)^{(q-1)/2},
$$
which is also a polynomial in $\A$. Now, we deduce that
\begin{equation}
\label{eq:GM}
\begin{split}
G(f,S) & =  \prod_{a \in S} \Big( \prod_{n=0}^{\deg f -1 } a^{i_{n}} \prod_{\substack{ \textrm{$g$ monic} \\ \deg g =n, \, \gcd(g,f)=1}} g \Big) \\
& = \Big( \prod_{a\in S} a \Big)^{\frac{\Phi(f)}{q-1}}
\Big(  \prod_{\substack{g \textrm{ monic} \\ 0 \le \deg g < \deg f, \,\gcd(g,f)=1}} g \Big)^{\frac{q-1}{2}}\\
& = \delta(S)^{\frac{\Phi(f)}{q-1}}M(f).
\end{split}
\end{equation}
So, our problem is reduced to studying $M(f)$ modulo $f$.

By definition we also rewrite
\begin{align*}
G(f) = \prod_{\substack{0 \le \deg g < \deg f \\ \gcd(g,f)=1}} g
& = \prod_{n=0}^{\deg f -1 } \Big( \prod_{\substack{  \deg g =n \\ \gcd(g,f)=1}} g \Big) \\
& = \prod_{n=0}^{\deg f -1 } \left( \big( \prod_{a\in \F_q^*} a \big)^{i_n} \Big(\prod_{\substack{  \textrm{$g$ monic} \\ \deg g =n, \, \gcd(g,f)=1}} g \Big)^{q-1}\right) \\
& = (-1)^{\frac{\Phi(f)}{q-1}} M(f)^2.
\end{align*}
Now assume that $f$ has the prime factorization as in \eqref{eq:factor}.  So, using Theorem \ref{thm:con1} we obtain
\begin{equation}
\label{eq:M2}
\begin{split}
M(f)^2
& \equiv \left\{\begin{array}{ll}
-1 \pmod f & \textrm{if $t=1$ and $\deg P_1$ is even,}\\
\\
1 \pmod f &\textrm{otherwise}.
\end{array}
\right.
\end{split}
\end{equation}

We first handle $M(f)$ in the case when $f$ is irreducible.

\begin{lemma}
\label{lem:MP}
Assume that $q \equiv 3 \pmod 4$.
If $f \in \A$ is an irreducible polynomial of odd degree with the form $f=aP$, where $a \in \F_q^*$ and $P\in \A$ is a prime polynomial,   then
$$
M(f) \equiv  (-1)^{\frac{1}{2}(h(-P)-1)} \pmod P.
$$
\end{lemma}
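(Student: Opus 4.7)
Since $f=aP$ with $P$ prime and $a\in\F_q^*$, every nonzero polynomial of degree $<\deg P$ is automatically coprime to $f$, so $M(f)\equiv T^{(q-1)/2}\pmod P$, where $T:=\prod_{h\text{ monic},\,0\le\deg h<\deg P}h$. Write $m=1+q+\cdots+q^{\deg P-1}$ for the number of such monic factors; a short term-by-term count using $q\equiv 3\pmod 4$ and $\deg P$ odd shows $m\equiv 1\pmod 4$, and in particular $m$ is odd.

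The key step, and the main obstacle, is to show that $T$ reduces modulo $P$ to an element of $\F_q^*$. For this I combine the polynomial Wilson theorem $G(P)\equiv -1\pmod P$ with the factorization of every nonzero $g$ of degree $<\deg P$ as (leading coefficient)$\cdot$(monic polynomial). Grouping the factors in $G(P)$ this way gives $(-1)^m T^{q-1}\equiv -1\pmod P$, and since $m$ is odd one obtains $T^{q-1}\equiv 1\pmod P$. Hence $T\equiv c\pmod P$ for some $c\in\F_q^*$.

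Once $T\equiv c\in\F_q^*$, the two exponents $(q-1)/2$ and $(|P|-1)/2=(q-1)m/2$ produce the same sign: since $c^{(q-1)/2}\in\{\pm 1\}$ and $m$ is odd, $\bigl(c^{(q-1)/2}\bigr)^m=c^{(q-1)/2}$, so
$$M(f)\equiv c^{(q-1)/2}=c^{(|P|-1)/2}\equiv T^{(|P|-1)/2}\pmod P.$$
Applying the Euler criterion termwise converts $T^{(|P|-1)/2}=\prod_h h^{(|P|-1)/2}$ into $\prod_h\left(\frac{h}{P}\right)=(-1)^{N_-}$, where $N_-$ counts monic $h$ of degree $<\deg P$ that are quadratic non-residues modulo $P$.

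Finally I invoke Lemma \ref{lem:Artin}: $h(-P)=N_+-N_-$ together with $N_++N_-=m$ gives $N_-=(m-h(-P))/2$, and the congruence $m\equiv 1\pmod 4$ established in the first paragraph yields $N_-\equiv (h(-P)-1)/2\pmod 2$, which produces the claimed formula for $M(f)\bmod P$. All the real content is concentrated in the identification $T\equiv c\in\F_q^*$; both the exchange of exponents and the appeal to the class number formula are then essentially formal.
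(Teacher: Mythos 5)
Your proof is correct, and the central step is carried out by a genuinely different mechanism than the paper's. Both arguments ultimately reduce $M(f)$ modulo $P$ to $(-1)^{N}$, where $N$ counts the monic quadratic non-residues of degree less than $\deg P$, and both finish identically via Lemma \ref{lem:Artin} together with the congruence $1+q+\cdots+q^{\deg P-1}\equiv 1\pmod 4$. The difference lies in how that reduction is achieved. The paper (following Mordell) splits the monic polynomials into residues and non-residues, replaces each monic non-residue $g$ by $-g$ (a residue, since $-1$ is a non-residue in this setting) at the cost of a factor $(-1)^{(q-1)/2}=-1$ per term, and then invokes the identity \eqref{eq:identity} that the product of all quadratic residues of degree less than $\deg P$ equals $1$, which is an application of Lemma \ref{lem:Miller} to the subgroup of squares. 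You instead first show that $T=\prod_h h$ reduces to a constant $c\in\F_q^*$ modulo $P$, using $G(P)\equiv -1\pmod P$, the factorization $G(P)=(-1)^m T^{q-1}$ with $m$ odd, and the fact that the solutions of $x^{q-1}=1$ in the field $\A/P\A$ are exactly the elements of $\F_q^*$; you then trade the exponent $(q-1)/2$ for $(|P|-1)/2$ (legitimate because $c^{(q-1)/2}=\pm 1$ and $m$ is odd), so that Euler's criterion converts $T^{(|P|-1)/2}$ directly into $\prod_h\left(\frac{h}{P}\right)=(-1)^{N}$. Your route avoids the sign-flipping manipulation and the auxiliary identity \eqref{eq:identity}, at the price of the (standard, but worth stating explicitly) fact about $(q-1)$-th roots of unity in $\A/P\A$; both approaches use $q\equiv 3\pmod 4$ and the oddness of $\deg P$ in essential ways.
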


\begin{proof}
We apply similar arguments as in \cite{Mordell}.
By definition, we directly have
$M(f)=M(P)$. So, it is equivalent to determine the value of $M(P)$ modulo $P$ (or equivalently modulo $f$).

 Put $d= \deg P$.
We first make some preparations. Let $N$ (resp. $R$) be the number of
monic polynomials in $\A$ of degree less than $d$ which are quadratic non-residues (resp. quadratic residues)
modulo $P$. So,
$$
N+R= 1+ q + \cdots + q^{d -1},
$$
which, together with Lemma \ref{lem:Artin}, implies that
$$
 N = \frac{1}{2} \Big(1+ q + \cdots + q^{d -1} - h(-P) \Big) .
$$
Note that $q \equiv 3 \pmod 4$ and $d$ is odd, so we have
$$
1+ q + \cdots + q^{d -1} \equiv 1 \pmod 4.
$$
Thus, we get
\begin{equation}
\label{eq:nonresidue}
N \equiv \frac{1}{2} \big( h(-P) - 1  \big) \pmod 2.
\end{equation}

Given a non-zero polynomial $g$ which is a quadratic residue modulo $P$, $bg$ is also a quadratic residue for any square element $b\in \F_q^*$ (equivalently, $b$ is a quadratic residue modulo $P$), and
$$
\prod_{b\in \F_q^*, \, \left(\frac{b}{P}\right)=1} bg =g^{(q-1)/2}.
$$
Besides, note that for any non-zero polynomial $g\in \A$, we can write $g=cg_0$ for some $c\in \F_q^*$ and
monic polynomial $g_0 \in \A$. Then, noticing $q \equiv 3 \pmod 4$, we get
$$
g^{(q-1)/2}= \pm g_0^{(q-1)/2} = (\pm g_0) ^{(q-1)/2}.
$$
Based on the above observations and the fact that $-1$ is not a quadratic residue modulo $P$ and is the unique element of order two in $(\A/P\A)^*$, we deduce that
\begin{equation}
\label{eq:identity}
1= \prod_{\substack{\textrm{$\left(\frac{g}{P}\right)=1$} \\ 0 \le \deg g < d}} g
=\prod_{\substack{\textrm{$\sgn(g)=\pm 1$, $\left(\frac{g}{P}\right)=1$} \\ 0 \le \deg g < d}} g^{(q-1)/2},
\end{equation}
where we also use the simple fact that the inverse of a quadratic residue is also a quadratic residue (modulo $P$).

Using \eqref{eq:identity}, we obtain
\begin{equation*}
\begin{split}
M(P) & = \Big(  \prod_{\substack{\textrm{$g$ monic, $\left(\frac{g}{P}\right)=1$} \\ 0 \le \deg g < d}} g \prod_{\substack{\textrm{$g$ monic, $\left(\frac{g}{P}\right)=-1$} \\ 0 \le \deg g < d}} g\Big)^{(q-1)/2} \\
& = (-1)^N     \prod_{\substack{\textrm{$g$ monic, $\left(\frac{g}{P}\right)=1$} \\ 0 \le \deg g < d}} g^{(q-1)/2} \prod_{\substack{\textrm{$g$ monic, $\left(\frac{g}{P}\right)=-1$} \\ 0 \le \deg g < d}} (-g)^{(q-1)/2} \\
& = (-1)^N     \prod_{\substack{\textrm{$\sgn(g)=\pm 1$, $\left(\frac{g}{P}\right)=1$} \\ 0 \le \deg g < d}} g^{(q-1)/2}\\
& \equiv (-1)^N \pmod P.
\end{split}
\end{equation*}
 Now, the desired result follows from \eqref{eq:nonresidue}.
\end{proof}

In the following, we extend the result in Lemma \ref{lem:MP} to the case when $f$ is a power of some prime polynomial up to a constant. Actually, we can obtain a more general form.

\begin{lemma}
\label{lem:MPe}
If $f \in \A$ is a polynomial of the form $f=aP^e$, where $a \in \F_q^{*}$, $P\in \A$ is a prime polynomial and $e$ is a positive integer, then
$$ M(f)\equiv (-1)^{\frac{(e-1)(q-1)}{2}\deg P }M(P) \pmod P. $$
\end{lemma}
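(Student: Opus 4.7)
My plan is to organize the product defining $M(f)$ by degree, reduce each degree-slice modulo $P$, and compare what remains to $M(P)$. Since $a \in \F_q^*$, the condition $\gcd(g,f)=1$ is equivalent to $\gcd(g,P)=1$. Writing $d = \deg P$ and
$$
T_n := \prod_{\substack{g \text{ monic},\, \deg g = n \\ \gcd(g,P)=1}} g \;\in\; \A,
$$
we have $M(f) = \bigl(\prod_{n=0}^{ed-1} T_n\bigr)^{(q-1)/2}$, and because every monic polynomial of degree strictly less than $d$ is automatically coprime to $P$, also $M(P) = \bigl(\prod_{n=0}^{d-1} T_n\bigr)^{(q-1)/2}$. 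So the task reduces to understanding $T_n \bmod P$ in the extra range $d \le n < ed$.

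The heart of the argument is that each such $T_n$ is congruent to $-1$ modulo $P$. Polynomial long division gives a unique decomposition $g = hP + r$ with $\deg r < d$; a comparison of leading terms shows that $g$ is monic of degree $n$ precisely when $h$ is monic of degree $n - d$, while $\gcd(g,P)=1$ precisely when $r \ne 0$. Modulo $P$ the outer factor $h$ disappears, so each of the $q^{n-d}$ admissible $h$'s contributes the same factor
$$
\prod_{\substack{0 \le \deg r < d \\ r \ne 0}} r \;=\; G(P) \;\equiv\; -1 \pmod P,
$$
by the Wilson-type congruence recalled at the start of Section~3. Since $q$ is odd, $q^{n-d}$ is odd, and hence $T_n \equiv -1 \pmod P$ for every $n$ with $d \le n < ed$.

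Combining the two ranges yields
$$
\prod_{n=0}^{ed-1} T_n \;\equiv\; (-1)^{(e-1)d} \prod_{n=0}^{d-1} T_n \pmod P,
$$
and raising to the $(q-1)/2$-th power produces exactly the claimed identity $M(f) \equiv (-1)^{(e-1)(q-1)\deg P / 2}\, M(P) \pmod P$. The only mildly delicate step is the "$h$ monic of degree $n-d$" part of the decomposition, which is just a leading-coefficient check; the remainder is routine bookkeeping about monic polynomial counts plus a single application of the polynomial Wilson theorem for~$P$.
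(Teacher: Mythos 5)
Your proof is correct and follows essentially the same route as the paper: Euclidean division by $P$, collapsing the quotient part modulo $P$, and invoking the Wilson-type congruence $G(P)\equiv -1 \pmod P$ to evaluate the extra factors. The only difference is cosmetic (you slice the product by degree, whereas the paper groups all quotients $g_1$ of degree less than $d(e-1)$ into a single block), and the parity bookkeeping agrees in both versions.
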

\begin{proof}
Put $d= \deg P$.
Note that for any non-zero polynomial $g\in \A$ of degree $\deg g < \deg f=de$, by the Euclidean division we can write
$$
g = g_1P+g_2 \quad \textrm{for some $g_1,g_2 \in \A,$}
$$
where $g_1$ and $g_2$ satisfy $\deg g_1 <d(e-1)$ and $g_2=0$ or $0\leq \deg g_2 <d$. Then by definition, we have
$$
M(f) = \Big(  Q_{0}Q_{1} \Big)^{(q-1)/2},
$$
where
$$
Q_{0}=\prod_{\substack{g \textrm{ monic} \\ 0 \le \deg g < d}} g
\quad \textrm{and} \quad
Q_{1}=\prod_{\substack{g_2 \in \A \\ 0 \le \deg g_2 < d}}\prod_{\substack{g_1 \textrm{ monic} \\ 0 \le \deg g_1 < d(e-1),}} (g_1P+g_2).
$$
Now for $Q_{1}$, we deduce that
\begin{align*}
Q_{1} & \equiv \Big(\prod_{\substack{g \in \A \\ 0 \le \deg g < d}} g \Big)^{\frac{q^{d(e-1)}-1}{q-1}}\pmod P \\
& \equiv G(P)^{\frac{q^{d(e-1)}-1}{q-1}}\pmod P \\
& \equiv (-1)^{d(e-1)} \pmod P,
\end{align*}
where we have applied Theorem \ref{thm:con1}.
Therefore
\begin{align*}
M(f) &= \Big(  Q_{0}Q_{1} \Big)^{(q-1)/2}\\
&\equiv (-1)^{\frac{d(e-1)(q-1)}{2}}M(P) \pmod P,
\end{align*}
which completes the proof.
\end{proof}

We also need a simple but useful result, which is an analogue of \cite[Lemma 1]{CD2008}. One can prove it in a straightforward manner; we therefore omit its proof.

\begin{lemma}
\label{lem:lift}
Suppose that $q$ is odd. Let $f,g\in \A$ be two non-constant polynomials.
Given an integer $e\ge 1$, assume that $f^2 \equiv 1 \pmod{g^e}$. Then,
$f \equiv \pm 1 \pmod{g^e}$ if and only if $f \equiv \pm 1 \pmod g$, with the signs corresponding to each other.
\end{lemma}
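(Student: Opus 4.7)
The plan is to factor the hypothesis $f^2 \equiv 1 \pmod{g^e}$ as $(f-1)(f+1) \equiv 0 \pmod{g^e}$ and exploit the fact that $f-1$ and $f+1$ differ by $2$, which is a unit of $\A$ precisely because $q$ is odd. No appeal to the structural lemmas of Section~2 is needed; the argument mirrors the classical version for $\Z$ that underlies \cite[Lemma 1]{CD2008}.

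The ``only if'' direction is immediate: if $f \equiv \pm 1 \pmod{g^e}$, then $g^e \mid (f \mp 1)$, so a fortiori $g \mid (f \mp 1)$ and $f \equiv \pm 1 \pmod g$ with matching signs. For the ``if'' direction, I would treat the case $f \equiv 1 \pmod g$ in detail; the case $f \equiv -1 \pmod g$ is obtained by switching the roles of $f-1$ and $f+1$. The key step is to check that $\gcd(g, f+1) = 1$: any prime polynomial $P \in \A$ dividing both $g$ and $f+1$ would also divide $(f+1) - (f-1) = 2$, which is impossible since $q$ odd makes $2 \in \F_q^*$ a unit. Therefore $\gcd(g^e, f+1) = 1$, and combined with $g^e \mid (f-1)(f+1)$ this forces $g^e \mid f-1$, i.e., $f \equiv 1 \pmod{g^e}$.

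I expect no substantive obstacle here: the argument uses nothing beyond the oddness of $q$ and the prime factorization in $\A$, and it works verbatim whether or not $g$ is irreducible. The only point worth emphasizing in the writeup is precisely where the assumption $q$ odd enters, since without it the element $2$ could vanish and the coprimality step would collapse, leaving the lemma false (as one sees in characteristic $2$, where $f-1 = f+1$).
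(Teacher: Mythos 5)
Your argument is correct and complete: factoring $f^2-1=(f-1)(f+1)$, observing that a common prime divisor of $g$ and both factors would divide $2\in\F_q^*$ (using $q$ odd), and then pulling $g^e$ entirely into one factor is exactly the ``straightforward'' argument the paper has in mind when it omits the proof of this lemma. Nothing is missing.
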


Now we are ready to get a partial result about the value of $G(f,S)$ modulo $f$. We use the product $\delta(S)$ defined just before \eqref{eq:delta}.

\begin{theorem}
\label{thm:GPe}
Assume that $q \equiv 3 \pmod 4$.
If $f \in \A$ is a polynomial of the form $f=aP^e$, where $a \in \F_q^{*}$, $P\in \A$ is a prime polynomial of odd degree and $e$ is a positive integer, then
$$
G(f,S)\equiv (-1)^{e+\frac{1}{2}(h(-P)-3)}\delta(S) \pmod f.
$$
\end{theorem}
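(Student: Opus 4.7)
The plan is to assemble the theorem from the preceding ingredients in essentially three moves: rewrite $G(f,S)$ via the identity \eqref{eq:GM}, evaluate $M(f)$ modulo $P$ via Lemmas \ref{lem:MPe} and \ref{lem:MP}, and then lift this congruence from modulo $P$ up to modulo $f=aP^e$ using Lemma \ref{lem:lift}.

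First, from \eqref{eq:GM} I have
\[
G(f,S)\equiv \delta(S)^{\Phi(f)/(q-1)}\,M(f)\pmod f.
\]
Since $q\equiv 3\pmod 4$, the identity \eqref{eq:delta} gives $\delta(S)^2=1$, so only the parity of $\Phi(f)/(q-1)$ matters. Because $\Phi(P^e)/(q-1)=q^{(e-1)\deg P}\cdot(1+q+\cdots+q^{\deg P-1})$, and both factors are odd ($q$ odd, and $\deg P$ odd so the geometric sum has an odd number of odd terms), this exponent is odd and $\delta(S)^{\Phi(f)/(q-1)}=\delta(S)$.

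Next I evaluate $M(f)\pmod P$. By Lemma \ref{lem:MPe},
\[
M(f)\equiv (-1)^{\frac{(e-1)(q-1)}{2}\deg P}\,M(P)\pmod P,
\]
and since $q\equiv 3\pmod 4$ makes $(q-1)/2$ odd while $\deg P$ is odd, this exponent has the same parity as $e-1$. Combining with Lemma \ref{lem:MP}, I obtain
\[
M(f)\equiv (-1)^{e-1}\,(-1)^{(h(-P)-1)/2}\equiv (-1)^{e+\frac{1}{2}(h(-P)-3)}\pmod P.
\]

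Finally, I lift this from modulo $P$ to modulo $f$. By \eqref{eq:M2}, since $t=1$ and $\deg P$ is odd, $M(f)^2\equiv 1\pmod f$. Applying Lemma \ref{lem:lift} with $g=P$ (the constant $a\in\F_q^*$ is irrelevant modulo $f=aP^e$) upgrades the congruence modulo $P$ into the same congruence modulo $P^e$, and hence modulo $f$:
\[
M(f)\equiv (-1)^{e+\frac{1}{2}(h(-P)-3)}\pmod f.
\]
Multiplying by $\delta(S)$ yields the claim. There is no real obstacle here — the one point needing care is the bookkeeping of parities (showing $\Phi(f)/(q-1)$ is odd and collapsing the exponent $\tfrac12(e-1)(q-1)\deg P$ to $e-1\pmod 2$), which relies crucially on all three hypotheses $q\equiv 3\pmod 4$, $\deg P$ odd, and $t=1$.
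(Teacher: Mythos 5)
Your proposal is correct and follows essentially the same route as the paper's own proof: rewrite $G(f,S)=\delta(S)^{\Phi(f)/(q-1)}M(f)$ via \eqref{eq:GM}, compute $M(f)\bmod P$ from Lemmas \ref{lem:MPe} and \ref{lem:MP}, and lift to modulus $P^e$ with \eqref{eq:M2} and Lemma \ref{lem:lift}. Your explicit verification that $\Phi(f)/(q-1)$ is odd (so that $\delta(S)^{\Phi(f)/(q-1)}=\delta(S)$) is a welcome extra detail that the paper leaves implicit.
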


\begin{proof}
By \eqref{eq:GM}, we first note that
$$
G(f,S) = \delta(S)^{\frac{\Phi(f)}{q-1}}M(f) = \delta(S)M(f),
$$
where we use the fact that $\delta(S)=\pm 1$ since $q \equiv 3 \pmod 4$.
From Lemma \ref{lem:MPe}, we have
$$
M(f)\equiv (-1)^{\frac{(e-1)(q-1)}{2}\deg P }M(P) \equiv (-1)^{e-1}M(P) \pmod P,
$$
which, together with Lemma \ref{lem:MP}, gives
\begin{equation}
\label{eq:MP}
M(f)\equiv (-1)^{e-1+\frac{1}{2}(h(-P)-1)} \pmod P.
\end{equation}

Since $M(f)^2 \equiv 1 \pmod{P^e}$ by \eqref{eq:M2}, using Lemma \ref{lem:lift} we know that
$M(f) \equiv \pm 1 \pmod{P^e}$ if and only if $M(f) \equiv \pm 1 \pmod P$, with the signs corresponding to each other. Thus, by \eqref{eq:MP} we obtain
\begin{equation}
M(f)\equiv (-1)^{e+\frac{1}{2}(h(-P)-3)} \pmod{P^e}.
\end{equation}
Now, the desired result follows.
\end{proof}

\subsection{Two or more prime divisors}

In this section, we deal with the case when $f$ has more than one prime divisors. We start with a key lemma.

\begin{lemma}
\label{lem:reduction}
If $f \in \A$ is a polynomial having the prime factorization as in \eqref{eq:factor},
then
\begin{equation*}
\begin{split}
M(f)
& \equiv \left\{\begin{array}{ll}
(-1)^{\frac{q-1}{2}\deg P_{2} }P_{2}^{-\frac{\Phi(P_{1}^{e_{1}})}{2}}  \pmod{P_{1}^{e_{1}}} & \textrm{if $t=2$,}\\
\\
P_{t}^{-\frac{\Phi\big(P_{1}^{e_{1}}P_{2}^{e_{2}}\cdots P_{t-1}^{e_{t-1}}\big)}{2}}  \pmod{P_{1}^{e_{1}}P_{2}^{e_{2}}\cdots P_{t-1}^{e_{t-1}}} &\textrm{if $t\geq 3$.}
\end{array}
\right.
\end{split}
\end{equation*}
\end{lemma}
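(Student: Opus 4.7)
The plan is to evaluate $M(f) \bmod f_1$ by a convenient inclusion--exclusion. I unify the two cases by writing $f_1 = P_1^{e_1}$ when $t = 2$ and $f_1 = P_1^{e_1}P_2^{e_2}\cdots P_{t-1}^{e_{t-1}}$ when $t \ge 3$, together with $f_2 = P_t^{e_t}$, and setting $N_1 = \deg f_1$, $d_t = \deg P_t$, $N_2 = e_t d_t$. Introduce the auxiliary ``inner'' product
\[
\widetilde{M}(f) := \prod_{\substack{g \text{ monic}\\ 0\le \deg g<\deg f\\ \gcd(g,f)=1}} g,
\]
so that $M(f) = \widetilde{M}(f)^{(q-1)/2}$. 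Then define
\[
A := \prod_{\substack{g\text{ monic}\\ \deg g<N_1+N_2\\ \gcd(g,f_1)=1}} g,\qquad B := \prod_{\substack{g\text{ monic}\\ \deg g<N_1+N_2\\ \gcd(g,f_1)=1,\,P_t\mid g}} g.
\]
Since $A$'s index set is the disjoint union of $\widetilde{M}(f)$'s index set (those $g$ with $\gcd(g,P_t)=1$) and $B$'s index set (those with $P_t\mid g$), we have $A = \widetilde{M}(f)\cdot B$, and as every factor of $B$ is coprime to $f_1$, this gives $\widetilde{M}(f)\equiv AB^{-1}\pmod{f_1}$.

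Next I would compute $A \bmod f_1$ by grouping the $g$'s according to their residue $v\in(\A/f_1\A)^*$. Writing $g = v+uf_1$ uniquely with $u=0$ or $0\le \deg u<N_2$, the polynomial $g$ is monic iff either $u = 0$ and $v$ itself is monic, or $u$ is monic; this yields $[v\text{ monic}] + (q^{N_2}-1)/(q-1)$ monic $g$'s per residue $v$. Consequently,
\[
A \equiv \widetilde{M}(f_1)\cdot G(f_1)^{(q^{N_2}-1)/(q-1)}\pmod{f_1}.
\]
For $B$, the substitution $g = P_t h$ reduces the product to the analogous one with $h$ monic, $\deg h < N_1 + (e_t-1)d_t$, and $\gcd(h,f_1)=1$; the same count produces $B\equiv P_t^K\widetilde{M}(f_1)G(f_1)^{(q^{(e_t-1)d_t}-1)/(q-1)}\pmod{f_1}$ with $K = \Phi(f_1)q^{(e_t-1)d_t}/(q-1)$. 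Taking the ratio cancels $\widetilde{M}(f_1)$, and using $q^{N_2}-q^{(e_t-1)d_t}=q^{(e_t-1)d_t}(q^{d_t}-1)=\Phi(f_2)$ and then raising to the $(q-1)/2$ power gives
\[
M(f)\equiv P_t^{-\Phi(f_1)q^{(e_t-1)d_t}/2}\cdot G(f_1)^{\Phi(f_2)/2}\pmod{f_1}.
\]

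It remains to simplify these two factors. By Euler's theorem $P_t^{\Phi(f_1)/2}$ has order dividing $2$ in $(\A/f_1\A)^*$, and raising a $2$-torsion element to the odd exponent $q^{(e_t-1)d_t}$ is a no-op, so the $P_t$-factor collapses to $P_t^{-\Phi(f_1)/2}$. Theorem~\ref{thm:con1} gives $G(f_1)\equiv -1\pmod{f_1}$ in the $t = 2$ case (one prime divisor) and $G(f_1)\equiv 1\pmod{f_1}$ in the $t\ge 3$ case (at least two), so the $G(f_1)$-factor disappears for $t\ge 3$, yielding the second formula directly; for $t = 2$ a short parity computation, splitting on $q\bmod 4$, shows $(-1)^{\Phi(f_2)/2}=(-1)^{(q-1)d_t/2}$, producing the sign in the first formula. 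The main obstacle is getting the combinatorial count for $A$ correct---in particular the careful separation of the $u = 0$ contribution from the monic-$u$ contributions for each residue class---while the only other subtle step is the elimination of the $q^{(e_t-1)d_t}$ factor from the $P_t$-exponent via Euler's theorem.
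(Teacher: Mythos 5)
Your proof is correct and follows essentially the same route as the paper: both split the monic polynomials coprime to $f$ by separating out (then dividing back off) the multiples of $P_t$ among those coprime to $\widetilde{f}=P_1^{e_1}\cdots P_{t-1}^{e_{t-1}}$, count monic representatives in each residue class to reduce everything to $G(\widetilde{f})$ and a power of $P_t$, and finish with Theorem~\ref{thm:con1}. Your write-up merely organizes the bookkeeping via the quotient $A B^{-1}$ instead of the paper's Euclidean-division products $Q_0,Q_1$ with a numerator-and-denominator correction, and it makes explicit the Euler's-theorem reduction of the $P_t$-exponent that the paper passes over silently.
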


\begin{proof}
Put $\widetilde{f}=P_{1}^{e_{1}}P_{2}^{e_{2}}\cdots P_{t-1}^{e_{t-1}},d_{t}= \deg P_{t}$.
Note that for any non-zero polynomial $g\in \A$ of degree $\deg g < \deg f$, by the Euclidean division we can write
$$
g = g_1\widetilde{f}+g_2 \quad \textrm{for some $g_1,g_2 \in \A$,}
$$
where $g_1$ and $g_2$ satisfy $\deg g_1 <d_{t}e_{t}$ and $g_2=0$ or $0\leq \deg g_2 <\deg \widetilde{f}$. Then by definition, we have
\begin{equation}
\label{eq:M(f)}
M(f) = \Big(  Q_{0}Q_{1} \Big)^{(q-1)/2},
\end{equation}
where
$$
Q_{0}=\prod_{\substack{g_{2} \textrm{ monic} \\ 0 \le \deg g_{2} < \deg \widetilde{f}\\ \gcd(g_{2},f)=1}} g_{2} \ \ \
\textrm{ and}\ \ \
Q_{1}=\prod_{\substack{g_2 \in \A \\ 0 \le \deg g_2 < \deg \widetilde{f}\\ \gcd(g_{2},f)=1}}\prod_{\substack{g_1 \textrm{ monic} \\ 0 \le \deg g_1 < d_{t}e_{t},}} (g_1\widetilde{f}+g_2).
$$

Now, we define
$$
\overline{Q_{0}}=\prod_{\substack{g_{2} \textrm{ monic} \\ 0 \le \deg g_{2} < \deg \widetilde{f}\\ (g_{2},\widetilde{f})=1}} g_{2}
\quad \textrm{and} \quad
\overline{Q_{1}}=\prod_{\substack{g_2 \in \A \\ 0 \le \deg g_2 < \deg \widetilde{f}\\ \gcd(g_{2},\widetilde{f})=1}}\prod_{\substack{g_1 \textrm{ monic} \\ 0 \le \deg g_1 < d_{t}e_{t}}} (g_1\widetilde{f}+g_2),
$$
and obtain
\begin{align*}
\overline{Q_{1}}
&\equiv \Big(\prod_{\substack{g_2 \in \A \\ 0 \le \deg g_2 < \deg \widetilde{f}\\ \gcd(g_{2},\widetilde{f})=1}}g_2\Big)^{\frac{q^{d_{t}e_{t}}-1}{q-1}} \pmod{\widetilde{f}} \\
&\equiv G(\widetilde{f})^{\frac{q^{d_{t}e_{t}}-1}{q-1}} \pmod{\widetilde{f}}.
\end{align*}

For relating $Q_j$ to $\overline{Q_{j}}$, we multiply all relevant multiples of $P_t$ back to $Q_0$ and $Q_1$. More precisely, on the right-hand side of \eqref{eq:M(f)} we multiply numerator and denominator by
$$\Big(\prod_{\substack{g \textrm{ monic} \\ 0\leq \deg g < \deg \widetilde{f} +d_{t}(e_{t}-1)\\ \gcd(g,\widetilde{f})=1}} gP_{t}\Big)^{\frac{q-1}{2}},$$
which is equal to
\begin{align*}
&\Big(\prod_{\substack{g \textrm{ monic} \\ 0 \le \deg g < \deg \widetilde{f}\\ \gcd(g,\widetilde{f})=1}}gP_{t}\Big)^{\frac{q-1}{2}}\Big(\prod_{\substack{g_2 \in \A \\ 0 \le \deg g_2 <\deg \widetilde{f}\\ \gcd(g_{2},\widetilde{f})=1}}\prod_{\substack{g_1 \textrm{ monic} \\ 0 \le \deg g_1 < d_{t}(e_{t}-1),}} (g_1\widetilde{f}+g_2)P_{t}\Big)^{\frac{q-1}{2}}\\
 &\equiv M(\widetilde{f})P_{t}^{\frac{\Phi(\widetilde{f})}{2}}\left(\prod_{\substack{g_2 \in \A \\ 0 \le \deg g_2 <\deg \widetilde{f}\\ \gcd(g_{2},\widetilde{f})=1}}\Big(g_2P_{t}\Big)^{\frac{q^{d_{t}(e_{t}-1)}-1}{q-1}}\right)^{\frac{q-1}{2}} \pmod{\widetilde{f}}\\
&\equiv M(\widetilde{f})P_{t}^{\frac{\Phi(\widetilde{f})}{2}}G(\widetilde{f})^{\frac{q^{d_{t}(e_{t}-1)}-1}{2}} \pmod{\widetilde{f}}.
\end{align*}
Hence, from the above discussions, we deduce that
\begin{align*}
M(f)
& \equiv \frac{\Big(\overline{Q_{0}}\cdot\overline{Q_{1}}\Big)^{\frac{q-1}{2}}}
{M(\widetilde{f})P_{t}^{\frac{\Phi(\widetilde{f})}{2}}G(\widetilde{f})^{\frac{q^{d_{t}(e_{t}-1)}-1}{2}}} \pmod{\widetilde{f}} \\
&\equiv\frac{M(\widetilde{f})\cdot G(\widetilde{f})^{\frac{q^{d_{t}e_{t}}-1}{2}}}{M(\widetilde{f})P_{t}^{\frac{\Phi(\widetilde{f})}{2}}G(\widetilde{f})^{\frac{q^{d_{t}(e_{t}-1)}-1}{2}}} \pmod{\widetilde{f}} \\
& \equiv\frac{ G(\widetilde{f})^{\frac{1}{2}q^{d_{t}(e_{t}-1)}(q^{d_{t}}-1)}}{P_{t}^{\frac{\Phi(\widetilde{f})}{2}}} \pmod{\widetilde{f}}.
\end{align*}
By Theorem \ref{thm:con1}, this completes the proof.
\end{proof}

Now, we first address the case when $f$ has exactly two prime divisors.

\begin{theorem}
\label{thm:P1P2}
Assume that $q$ is odd, and $f \in \A$ is a polynomial having the prime factorization as in \eqref{eq:factor} with $t=2$.
Then, if one of the following two conditions holds:
\begin{enumerate}
\item $q \equiv 1 \pmod 4$,

\item $q \equiv 3 \pmod 4$ and both $\deg P_1$ and $\deg P_2$ are even,
\end{enumerate}
 we have
$$
G(f,S) \equiv \Big( \frac{P_1}{P_2} \Big)  \pmod{f};
$$
otherwise, we have
$$
G(f,S) \not\equiv \pm 1  \pmod{f}.
$$
\end{theorem}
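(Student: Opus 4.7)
The plan is to work modulo each $P_i^{e_i}$ separately, combine via the Chinese remainder theorem (Lemma \ref{lem:structure1}), and carefully track signs. Starting from \eqref{eq:GM}, I write
$$
G(f,S) = \delta(S)^{\Phi(f)/(q-1)}\, M(f),
$$
and apply the $t=2$ case of Lemma \ref{lem:reduction}:
$$
M(f) \equiv (-1)^{\frac{q-1}{2}\deg P_2}\, P_2^{-\Phi(P_1^{e_1})/2} \pmod{P_1^{e_1}},
$$
with the symmetric statement modulo $P_2^{e_2}$. Since $P_2^{q^{\deg P_1}}\equiv P_2\pmod{P_1}$, the exponent $\Phi(P_1^{e_1})/2=q^{(e_1-1)\deg P_1}(q^{\deg P_1}-1)/2$ collapses modulo $P_1$ to $(q^{\deg P_1}-1)/2$, so Euler's criterion yields $P_2^{\Phi(P_1^{e_1})/2}\equiv \left(\frac{P_2}{P_1}\right)\pmod{P_1}$. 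As this squares to $1\pmod{P_1^{e_1}}$, Lemma \ref{lem:lift} lifts the congruence to modulus $P_1^{e_1}$, and self-inverting $\pm 1$ gives
$$
M(f) \equiv (-1)^{\frac{q-1}{2}\deg P_2}\left(\frac{P_2}{P_1}\right) \pmod{P_1^{e_1}},
$$
and symmetrically modulo $P_2^{e_2}$.

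The next step is to dispatch the factor $\delta(S)^{\Phi(f)/(q-1)}$. When $q\equiv 1\pmod 4$, the order of $\delta(S)$ divides $4$, and $(q^{\deg P_i}-1)$ is divisible by $4$, so $\Phi(f)/(q-1)$ is divisible by $4$ and this factor is $1$. When $q\equiv 3\pmod 4$ with both $\deg P_i$ even, $\delta(S)=\pm 1$ and $(q^{\deg P_1}-1)/(q-1)=\sum_{j=0}^{\deg P_1-1}q^j$ is an even sum of odd integers, hence even, so again the factor is $1$. In both favorable cases the signs $(-1)^{\frac{q-1}{2}\deg P_i}$ are trivial as well, and reciprocity (Lemma \ref{lem:reciprocity}) yields $\left(\frac{P_2}{P_1}\right)=\left(\frac{P_1}{P_2}\right)$ (either because $\frac{q-1}{2}$ is even, or because $\deg P_1\deg P_2$ is even). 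Hence both local residues of $G(f,S)$ equal $\left(\frac{P_1}{P_2}\right)$, and CRT delivers the first claim.

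For the remaining case ($q\equiv 3\pmod 4$ and at least one $\deg P_i$ odd), set $\epsilon:=\delta(S)^{\Phi(f)/(q-1)}=\pm 1$. Multiplying the two local expressions for $G(f,S)$ and using $\epsilon^2=1$ together with reciprocity, the product collapses to
$$
(-1)^{\frac{q-1}{2}(\deg P_1+\deg P_2+\deg P_1\deg P_2)}.
$$
With $\frac{q-1}{2}$ odd, a parity check on the three sub-cases (both $\deg P_i$ odd, or exactly one odd) shows $\deg P_1+\deg P_2+\deg P_1\deg P_2$ is always odd, so the product equals $-1$. The two local signs of $G(f,S)$ therefore disagree, forcing $G(f,S)\not\equiv\pm 1\pmod f$. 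The main obstacle throughout is the sign bookkeeping: one must reconcile the contributions from $\delta(S)$, from $(-1)^{\frac{q-1}{2}\deg P_i}$ in Lemma \ref{lem:reduction}, and from reciprocity, and in particular verify the divisibility of $\Phi(f)/(q-1)$ that makes $\delta(S)^{\Phi(f)/(q-1)}$ trivialize in precisely the two favorable sub-cases.
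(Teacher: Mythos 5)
Your proposal is correct and follows essentially the same route as the paper's own proof: apply the $t=2$ case of Lemma \ref{lem:reduction}, evaluate $P_2^{-\Phi(P_1^{e_1})/2}$ via Euler's criterion, lift with Lemma \ref{lem:lift}, compare the two local residues using reciprocity, and finish with the Chinese Remainder Theorem (including the observation that the product of the two local signs is $-1$ in the unfavorable cases). The only cosmetic difference is that the paper disposes of $\delta(S)^{\Phi(f)/(q-1)}$ in one stroke (it always equals $1$ when $t=2$), whereas you verify it case by case and carry an $\epsilon=\pm1$ through the last case.
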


\begin{proof}
First, since $q$ is odd and $t=2$, by \eqref{eq:delta} and \eqref{eq:GM} we have
\begin{equation}
\label{eq:GM2}
G(f,S)= \delta(S)^{\frac{\Phi(f)}{q-1}}M(f)=M(f).
\end{equation}
Put $d_j = \deg P_j, j=1,2$.
By Lemma \ref{lem:reduction}, we have
\begin{align*}
M(f) & \equiv
(-1)^{\frac{(q-1)d_2}{2} }P_{2}^{-\frac{\Phi(P_{1}^{e_{1}})}{2}}  \pmod{P_1} \\
& \equiv  (-1)^{\frac{(q-1)d_2}{2} } \Big( \frac{P_2}{P_1}\Big)^{-1}  \pmod{P_1} \\
& \equiv  (-1)^{\frac{(q-1)d_2}{2} } \Big( \frac{P_2}{P_1}\Big)  \pmod{P_1}.
\end{align*}
From \eqref{eq:M2}, we know that $M(f)^2 \equiv 1 \pmod{P_1^{e_1}}$.
So, applying Lemma \ref{lem:lift} we get
\begin{equation}
\label{eq:modP1}
M(f) \equiv  (-1)^{\frac{(q-1)d_2}{2} } \Big( \frac{P_2}{P_1}\Big)  \pmod{P_1^{e_1}}.
\end{equation}
By symmetry, we have
\begin{equation}
\label{eq:modP2}
M(f) \equiv  (-1)^{\frac{(q-1)d_1}{2} } \Big( \frac{P_1}{P_2}\Big)  \pmod{P_2^{e_2}}.
\end{equation}

Now, assume that either $q \equiv 1 \pmod 4$, or $q \equiv 3 \pmod 4$ and both $d_1$ and $d_2$ are even. Then, by the reciprocity law (Lemma \ref{lem:reciprocity}) we obtain
$$
\Big( \frac{P_2}{P_1}\Big)   = \Big( \frac{P_1}{P_2}\Big),
$$
and thus
$$
M(f) \equiv \Big( \frac{P_1}{P_2} \Big) \qquad
\textrm{(mod $P_1^{e_1}$) and (mod $P_2^{e_2}$)}.
$$
Using the Chinese Remainder Theorem, we get
\begin{equation}
M(f) \equiv \Big( \frac{P_1}{P_2} \Big) \quad \pmod f.
\end{equation}
So, by \eqref{eq:GM2} we have
$$
G(f,S) \equiv \Big( \frac{P_1}{P_2} \Big) \quad \pmod f.
$$

In all other cases, one can similarly see that the product of the right-hand sides of \eqref{eq:modP1} and \eqref{eq:modP2} is equal to $-1$.
Hence, applying again the Chinese Remainder Theorem, we can conclude the proof.
\end{proof}

Finally, the case when $f$ has more than two prime divisors is much easier.

\begin{theorem}
\label{thm:P1P2P3}
Assume that $q$ is odd. If $f \in \A$ is a polynomial having the prime factorization as in \eqref{eq:factor} with $t\ge 3$,
then we have
$$
G(f,S) \equiv 1  \pmod{f}.
$$
\end{theorem}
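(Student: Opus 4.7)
The plan is to reduce $G(f,S)$ to $M(f)$, apply Lemma~\ref{lem:reduction} in symmetric form to pin down $M(f)$ modulo each $P_j^{e_j}$, and then combine via the Chinese Remainder Theorem.

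First I would verify that $G(f,S) = M(f)$ whenever $t \geq 2$. From \eqref{eq:GM} it suffices to show that $\delta(S)^{\Phi(f)/(q-1)} = 1$. Setting $d_i = \deg P_i$ and pulling a factor of $(q-1)$ out of each $q^{d_i}-1$, one can write $\Phi(f)/(q-1) = (q-1)^{t-1} \cdot N$ for some positive integer $N$. For $t \geq 2$, the factor $(q-1)^{t-1}$ is divisible by $2$ when $q \equiv 3 \pmod 4$ and divisible by $4$ when $q \equiv 1 \pmod 4$; combined with $\delta(S)^2 = (-1)^{(q+1)/2}$ from \eqref{eq:delta}, this gives $\delta(S)^{\Phi(f)/(q-1)} = 1$.

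Next I would observe that Lemma~\ref{lem:reduction} is fully symmetric in the prime divisors of $f$, since its proof singles out $P_t$ only by notation. So for $t \geq 3$ and any fixed $i \in \{1,\ldots,t\}$ playing the role of $P_t$, one obtains
\[
M(f) \equiv P_i^{-\Phi(f/P_i^{e_i})/2} \pmod{f/P_i^{e_i}}.
\]
Fix any $j \neq i$. Since $P_j^{e_j}$ divides $f/P_i^{e_i}$, reducing further gives a congruence modulo $P_j^{e_j}$, and the task reduces to proving $P_i^{\Phi(f/P_i^{e_i})/2} \equiv 1 \pmod{P_j^{e_j}}$.

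The crux is a divisibility count. The order of $P_i$ in $(\A/P_j^{e_j})^*$ divides $\Phi(P_j^{e_j})$, and we have the factorization $\Phi(f/P_i^{e_i}) = \Phi(P_j^{e_j}) \cdot \prod_{k \neq i,j} \Phi(P_k^{e_k})$. Because $t \geq 3$, there exists at least one index $k \neq i,j$, and each such $\Phi(P_k^{e_k}) = q^{(e_k-1)d_k}(q^{d_k}-1)$ is even since $q$ is odd. Hence $\prod_{k \neq i,j}\Phi(P_k^{e_k})$ is even, so $\Phi(P_j^{e_j})$ divides $\Phi(f/P_i^{e_i})/2$, which forces $P_i^{\Phi(f/P_i^{e_i})/2} \equiv 1 \pmod{P_j^{e_j}}$. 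Letting $j$ range over $\{1,\ldots,t\}$ (and choosing for each $j$ some $i \neq j$), the Chinese Remainder Theorem assembles these into $M(f) \equiv 1 \pmod f$, and hence $G(f,S) \equiv 1 \pmod f$. The main obstacle is purely the $2$-adic bookkeeping of the exponent, and it works precisely because having a third prime divisor provides the extra factor of $2$ needed to absorb the division by $2$.
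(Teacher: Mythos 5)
Your proof is correct and follows essentially the same route as the paper: reduce $G(f,S)$ to $M(f)$, apply Lemma~\ref{lem:reduction} symmetrically to each prime divisor, exploit the extra even factor $\Phi(P_k^{e_k})$ supplied by a third prime, and assemble with the Chinese Remainder Theorem. The only (harmless) difference is that you work directly modulo $P_j^{e_j}$ via Lagrange's theorem in $(\A/P_j^{e_j}\A)^*$, whereas the paper first reduces modulo $P_j$ using the Legendre symbol and then lifts the congruence with Lemma~\ref{lem:lift}; your variant slightly streamlines that step.
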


\begin{proof}
By Lemma \ref{lem:reduction}, we have
\begin{align*}
M(f) & \equiv
P_{t}^{-\frac{\Phi\big(P_{1}^{e_{1}}P_{2}^{e_{2}}\cdots P_{t-1}^{e_{t-1}}\big)}{2}}  \pmod{P_1} \\
& \equiv  \Big( \frac{P_t}{P_1}\Big)^{-\Phi\big(P_{2}^{e_{2}}\cdots P_{t-1}^{e_{t-1}}\big)}  \pmod{P_1} \\
& \equiv  1  \pmod{P_1}.
\end{align*}
From \eqref{eq:M2}, we know that $M(f)^2 \equiv 1 \pmod{P_1^{e_1}}$.
So, applying Lemma \ref{lem:lift} we get
$$
M(f) \equiv  1 \pmod{P_1^{e_1}}.
$$
By symmetry, we have
$$
M(f) \equiv  1  \pmod{P_j^{e_j}} \quad \textrm{for $2\le j \le t$.}
$$
So, by the Chinese Remainder Theorem we obtain
\begin{equation}
M(f) \equiv 1 \pmod f.
\end{equation}
Noticing $G(f,S)= \delta(S)^{\frac{\Phi(f)}{q-1}}M(f)=M(f)$, we complete the proof.
\end{proof}

\section*{Acknowledgements} 
The authors are very grateful to the
referee for careful reading and useful comments.
The research of the first author was supported by National Natural Science Foundation of China Grant No.11526119, and
the second author was supported by the Australian
Research Council Grant DP130100237.

\end{document}